\newcommand{\bs}{\boldsymbol}
\newcommand{\mbf}{\mathbf}
\newcommand{\bb}{\mathbf b}
\newcommand{\be}{\mathbf e}
\newcommand{\bE}{\mathbf E}
\newcommand{\bv}{\mathbf v}
\newcommand{\bw}{\mathbf w}
\newcommand{\bu}{\mathbf u}
\newcommand{\bx}{\mathbf x}
\newcommand{\bg}{{\boldsymbol g}}
\newtheorem{theorem}{Theorem}
\newtheorem{lema}{Lemma}
\newcounter{remark}
\def\theremark {\arabic{remark}}
\newenvironment{remark}{\refstepcounter{remark}\par\noindent{\bf Remark\ \theremark}\ }{\par}
\newtheorem{Proof}{Proof}
\newenvironment{proof}{\begin{Proof}\rm}{\hfill $\Box$ \end{Proof}}
\title{Error analysis of projection methods for non inf-sup stable mixed finite elements. The transient Stokes problem.
}
\author{Javier de Frutos\thanks{Instituto de Investigaci\'on en Matem\'aticas (IMUVA),
Universidad de Valladolid, Spain.  Research supported by Spanish MINECO
under grants MTM2013-42538-P and MTM2016-78995-P (AEI/FEDER, UE) (frutos@mac.uva.es)} \and Bosco
Garc\'{\i}a-Archilla\thanks{Departamento de Matem\'atica Aplicada
II, Universidad de Sevilla, Sevilla, Spain. Research supported by
Spanish MINECO under grant MTM2015-65608-P (bosco@esi.us.es)}
  \and Julia Novo\thanks{Departamento de
Matem\'aticas, Universidad Aut\'onoma de Madrid, Instituto de
Ciencias Matem\'aticas CSIC-UAM-UC3M-UCM, Spain. Research supported
by Spanish MINECO
under grants MTM2013-42538-P and MTM2016-78995-P (AEI/FEDER, UE) (julia.novo@uam.es)}}
\date{}
\begin{document}
\maketitle
\begin{abstract}
A modified Chorin-Teman (Euler non-incremental) projection method and a modified Euler incremental projection method for non inf-sup stable mixed finite elements are analyzed. The analysis of the classical Euler non-incremental and Euler incremental methods are obtained as a particular case. We first prove that the modified Euler non-incremental scheme has an inherent stabilization that allows the use of non inf-sup stable mixed finite elements without any kind of extra added stabilization. We show that it is also true in the case of the classical Chorin-Temam method. For the second scheme, we study a stabilization that allows the use of equal-order pairs of finite elements. The relation of the methods with the so called pressure stabilized Petrov Galerkin method (PSPG) is established. The influence of the chosen initial approximations  in the computed approximations to the pressure is analyzed. Numerical tests confirm the theoretical results.
\end{abstract}

{\bf keywords}
Projection methods, PSPG stabilization, non inf-sup stable elements
\bigskip


\section{Introduction}
In this paper we analyze a  modified Chorin-Temman (Euler non-incremental) projection method for non inf-sup stable mixed finite elements. The analysis of the classical Euler non-incremental  method is obtained as a particular case. We prove that both the  modified and the standard Euler non-incremental schemes have an inherent stabilization that allows the use of non inf-sup stable mixed finite elements without any kind of extra added stabilization. Although this result is known (see for example \cite{Guermond_Quar_IJNMF}) to our knowledge there are no proved error bounds for the Chorin-Temam method with non inf-sup stable elements in the literature (see below for related results in~\cite{badia-codina}).
For the closely-related Euler incremental scheme we analyze a modified method for non inf-sup stable pairs of finite elements. In this case an added stabilization is required. The analysis of a stabilized Euler incremental scheme is also obtained as a consequence of the analysis of the modified method.
We establish the  relation of the methods with the so called pressure stabilized Petrov Galerkin method (PSPG).

It has been observed in the literature that the standard Euler non-incremental scheme provides computed pressures that behave unstably for $\Delta t$ small and fixed $h$ if non inf-sup stable elements are used, see~\cite{codina}. With our error analysis we clarify this question since in that case the inherent PSPG stabilization of the method disappears.

In the present paper, we analyze the influence of the initial approximations
to the velocity and pressure in the error bounds for the pressure.
In agreement with the results obtained for the PSPG method in \cite{John_Novo_PSPG} a stabilized Stokes approximation of the initial data is suggested as initial approximation. We show both analytically and numerically that with this initial approximation we can obtain accurate approximations for the pressure from the first time step.

Our analysis is valid for any pair
of  non inf-sup stable  mixed finite elements whenever the pressure space $Q_h$ satisfies the condition $Q_h\subset H^1(\Omega)$. However, we prove that the rate of convergence cannot be
better than quadratic (in terms of $h$) for the $L^2$ errors of the velocity and linear for  the $L^2$ errors of the pressure so that using finite elements other than linear elements in the approximations to the velocity and pressure offers no clear advantage. In terms of $\Delta t$ the rate of convergence we prove is one for the $L^2$ errors of the velocity. For the $L^2$ discrete in time and $H^1$ in space errors for the velocity and $L^2$ discrete in time and $L^2$ in  space errors for the pressure the rate of convergence in terms of $\Delta t$ is one for the modified Chorin-Temam method and is one half for the standard Chorin-Temam method, accordingly to the rate of convergence of the continuous in space Chorin-Temam method, see \cite{Guermond_overview} and the references therein. The analysis presented in this paper is not intended to obtain bounds with constants independent of the viscosity parameter. The possibility of obtaining viscosity independent error bounds will be the subject of further research.

Of course, the Chorin-Temam projection method  is well known and this is not the first paper where the analysis of this method is considered. The analysis of the semidiscretization in time  is carried out in \cite{shen1}, \cite{shen2}, \cite{ranacher}, \cite{Prohl}, \cite{Prohl2}. In \cite{codina} the stability of the Chorin-Temam
projection method is considered and, in case of non inf-sup stable mixed finite elements, some a priori bounds
for the approximations to the velocity and pressure are obtained
but no error bounds are proven for this method. In  \cite{badia-codina} the Chorin-Teman method is considered together with both non inf-sup stable and inf-sup stable mixed finite elements.
In case of using non inf-sup stable mixed finite elements a local projection type stabilization is required in \cite{badia-codina} to get the
error bounds of the method. In the present paper, however, we get optimal error bounds without any extra stabilization for non inf-sup stable mixed finite elements.

For the Euler incremental scheme the analysis of the semidiscretization in time  can be found in~\cite{Prohl}.
The Euler incremental scheme with a spatial discretization based on inf-sup stable mixed finite elements is analyzed in \cite{Guermond_Quar2}. To our knowledge there is no error analysis for this method in case of using non-inf-sup stable elements.  Some stability estimates can be found in \cite{codina} for the method with added stabilization terms more related to local projection stabilization than to the PSPG stabilization we consider in the present paper.
A stabilized version of  the incremental scheme is also proposed in \cite{minev} although no error bounds are proved.
Finally, for an overview on projection methods we refer the reader to \cite{Guermond_overview}.

Being the Chorin-Temam projection method an old one, it has seen the appearance of many alternative methods during the years, many of which possess better convergence properties. The purpose of this
paper is not to discuss its advantages of disadvantages with respect to newer methods, but just to analyze its inherent stabilization properties
which allow the use of  non inf-sup stable elements without extra stabilization, and its connection with (more modern) PSPG stabilization.

For simplicity in the exposition we  concentrate in this paper in the transient Stokes equations assuming enough regularity for the solution.
In~\cite{Bosco-JuliaII} we extend the analysis to the Navier-Stokes equations in the general case in which non-local compatibility conditions for the solution are not assumed.

The outline of the paper is as follows. We first introduce some notation. In the second section we consider the steady Stokes equations and introduce a stabilized Stokes approximation that will be used in the error analysis of the method. Next section is devoted to the analysis of the evolutionary Stokes equations assuming enough regularity for the solution. Both methods  Euler non-incremental and Euler-incremental schemes are considered. In the last section some numerical experiments are shown.

\section{Preliminaries and notation}
Throughout the paper, standard notation is used for Sobolev
spaces and corresponding norms. In particular, given a measurable set $\omega\subset\mathbb{R}^d$, $d=2,3$, its Lebesgue
measure is denoted by~$|\omega|$,  the inner product in $L^2(\omega)$ or $L^2(\omega)^d$ is
denoted by $(\cdot,\cdot)_\omega$ and the notation $(\cdot,\cdot)$ is used instead
of $(\cdot,\cdot)_\Omega$.  The semi norm in $W^{m,p}(\omega)$ will be denoted
by $|\cdot|_{m,p,\omega}$ and, following~\cite{Constantin-Foias}, we define the norm~$\left\|\cdot\right\|_{m,p,\omega}$ as
$$
\left\| f\right\|_{m,p,\omega}^p=\sum_{j=0}^m \left|\omega\right|^{\frac{p(j-m)}{d}} \left| f\right|_{j,p,\omega}^p,
$$
so that $\left\|f\right\|_{m,p,\omega}\left|\omega\right|^{\frac{m}{d}-\frac{1}{p}}$ is scale invariant. We will also use
the conventions $\|\cdot\|_{m,\omega}=\|\cdot\|_{m,2,\omega}$ and
$\|\cdot\|_{m}=\|\cdot\|_{m,2,\Omega}$. As it is usual we will use the special notation $H^s(\omega)$ to denote $W^{s,2}(\omega)$ and
we will denote by $H_0^1(\Omega)$ the subspace of functions of  $H^1(\Omega)$  satisfying homogeneous Dirichlet boundary conditions. Finally, $L^2_0(\Omega)$ will denote the subspace of function of $L^2(\omega)$ with zero mean.

Let us denote by $\mathcal T_h$ a triangulation of the domain $\Omega$, which, for simplicity, is assumed to have a Lipschitz polygonal boundary.
On $\mathcal T_h$, we consider the  finite element spaces $V_h \subset V=H_0^1(\Omega)^d$ and
$Q_h \subset L_0^2(\Omega) \cap H^1(\Omega)$ based on local polynomials of degree $k$ and
$l$ respectively. Equal degree polynomials for velocity and pressure are allowed. It will be assumed in the rest of the paper that the family of
meshes is regular.

We will denote by $J_h \bu \in V_{h}$ the elliptic projection of a function $\bu\in V$ defined by
$$ (\nabla (\bu-J_h \bu),\nabla\bv_h)=0,\quad\forall\bv_h\in V_h.$$
The following bound holds
for $m=0,1$ and $\bu \in H^{k'+1}(\Omega)^d$, $0\le k'\le k$,
\begin{equation}\label{cota_inter}
\|\bu-J_h\bu\|_m\le Ch^{k'+1-m}\|\bu\|_{k'+1}.
\end{equation}

Analogously, we will denote by $J_h z\in Q_h$ the elliptic projection of a function $z \in H^1(\Omega)$. For $m=0,1$, $z\in H^{l'+1}(\Omega)$, $0\le l'\le l$ it holds
\begin{eqnarray}
\|z-J_hz\|_m&\le& C h^{l'+1-m}\|z\|_{l'+1},\label{eq:cota_ellip}\\
\|J_h z\|_1&\le& C \|z\|_1\label{eq:estabi_eli}.
\end{eqnarray}

The following inverse
inequality holds for each $v_{h} \in V_{h}$, see e.g., \cite[Theorem 3.2.6]{Cia78},
\begin{equation}
\label{inv} \| \bv_{h} \|_{W^{m,p}(K)} \leq C_{\mathrm{inv}}
h_K^{n-m-d\left(\frac{1}{q}-\frac{1}{p}\right)}
\|\bv_{h}\|_{W^{n,q}(K)},
\end{equation}
where $0\leq n \leq m \leq 1$, $1\leq q \leq p \leq \infty$, and $h_K$
is the size (diameter) of the mesh cell $K \in \mathcal T_h$.

Let~$\lambda$ be the smallest eigenvalue of  $A=-\Delta$ subject to homogeneous Dirichlet  boundary conditions,
$\Delta$ being the Laplacian operator in~$\Omega$. Then it is well-known that there exists a scale-invariant positive constant~$c_{-1}$ such that
\begin{equation}
\label{eq:cota_menos1}
\left\| \bv\right\|_{-1}\le c_{-1}\lambda^{-1/2} \left\| \bv\right\|_0,\qquad
\bv\in L^{2}(\Omega)^d,
\end{equation}
and, also,
\begin{equation}
\label{eq:poincare}
\left\| \bv\right\|_{0}\le \lambda^{-1/2} \left\|\nabla \bv\right\|_0,\qquad
\bv\in H^1_0(\Omega)^d,
\end{equation}
this last inequality is also known as the Poincar\'e inequality.

\section{A stabilized Stokes projection}

Let us consider the Stokes problem
\begin{eqnarray}\label{eq:stokes}
-\nu\Delta {\mbf s}+\nabla z&=&\hat \bg,\quad {\rm in}\quad \Omega\nonumber\\
\nabla \cdot {\mbf s}&=& 0,\quad {\rm in}\quad \Omega\\
{\mbf s}&=&{\bs 0},\quad{\rm on}\quad \partial \Omega.\nonumber
\end{eqnarray}
We define the stabilized Stokes approximation to (\ref{eq:stokes}) as the mixed
finite element approximation $({\mbf s}_h,z_h)\in (V_h,Q_h)$ satisfying
\begin{eqnarray}\label{eq:pro_stokes}
\nu(\nabla {\mbf s}_h,\nabla {\bs\chi}_h)+(\nabla z_h,{\bs\chi}_h)&=&(\hat\bg,{\bs\chi}_h),\quad \forall {\bs\chi}_h\in V_h,\\
(\nabla \cdot {\mbf s}_h,\psi_h)&=&-\delta(\nabla z_h,\nabla \psi_h),\quad \forall \psi_h\in Q_h,\label{eq:pro_stokes2}
\end{eqnarray}
where $\delta$ is a constant parameter. Observe that from~\eqref{eq:stokes} and~\eqref{eq:pro_stokes} it follows that the errors
$s_h-s$ and~$z_h-z$ satisfy that
\begin{equation}
\label{eq:gal_orthog}
\nu(\nabla({\mbf s}_h-{\mbf s}),\nabla{\bs\chi}_h) + (\nabla (z_h-z),{\bs\chi}_h)=0,\qquad \forall{\bs\chi}_h\in V_h.
\end{equation}

The pair $(J_h {\mbf s},J_h z)$ satisfies the following equations for all ${\bs\chi}_h\in V_h$ and $\psi_h\in Q_h$
\begin{eqnarray}\label{eq:inter_elli}
\nu(\nabla J_h{\mbf s},\nabla {\bs\chi}_h)+(\nabla J_h z,{\bs\chi}_h)&=&(\hat\bg,{\bs\chi}_h)-(T_{1},\nabla \cdot {\bs\chi}_h), \\
(\nabla \cdot J_h{\mbf s},\psi_h)&=&-\delta(\nabla J_h z,\nabla \psi_h)+
\delta (T_2,\nabla \psi_h), \nonumber
\end{eqnarray}
where  $T_{1}$ and~$T_2$ are the truncation errors
\begin{eqnarray}\label{eq:T1_2}
T_{1}=J_h z-z,\quad T_2=
\frac{{\mbf s}-J_h{\mbf s}}{\delta}+\nabla J_hz.
\end{eqnarray}
Let us denote by
$$
\be_h= {\mbf s}_h- J_h {\mbf s},\quad r_h=z_h-J_h z,
$$
subtracting (\ref{eq:inter_elli}) from (\ref{eq:pro_stokes}) it is easy to reach
\begin{eqnarray}
\nu(\nabla \be_h,\nabla{\bs\chi}_h)+(\nabla r_h,{\bs\chi}_h)&=&(T_{1},\nabla \cdot {\bs\chi}_h),\quad \forall{\bs\chi}_h\in V_h\qquad \label{eq:er1si}\\
( \nabla \cdot \be_h,\psi_h)&=&-\delta(\nabla r_h,\nabla \psi_h)-\delta(T_2,\nabla \psi_h),\quad \forall \psi_h \in Q_h.\nonumber
\end{eqnarray}
Taking ${\bs\chi}_h=\be_h$ and~$\psi_h=r_h$  we obtain
\begin{align*}
\nu\|\nabla \be_h\|_0^2+\delta \|\nabla r_h\|_0^2&\le \nu^{-1}\| T_1\|_0^2+
\delta \| T_2\|_0^2.
\end{align*}
In view of the expressions of~$T_1$ and~$T_2$ in~(\ref{eq:T1_2}), the right hand side above can be bounded in terms of
 $\nu^{-1}\|J_hz -z\|_0^2 +
\delta^{-1}\| J_h{\mbf s}-{\mbf s}\|_0^2 +\delta\left\|\nabla J_hz\right\|_0^2,
$
so that denoting
\begin{equation}
\label{eq:los_eMes}
M({\mbf s},z):=\nu^{-1/2}\left\| J_hz-z\right\|_0+\delta^{-1/2} \| J_h{\mbf s}-{\mbf s}\|_0,
\end{equation}
and recalling  (\ref{eq:estabi_eli}) we have
\begin{equation}\label{eq_cota_final_steady}
\nu\|\nabla \be_h\|_0^2+\delta \|\nabla r_h\|_0^2\le
C\bigl( M({\mbf s},z) +\delta^{1/2}\left\|\nabla z\right\|_0\bigr)^2.
\end{equation}
Using the triangle inequality we obtain
\begin{equation}\label{eq_cota_final_steady_def}
\nu^{1/2}\|\nabla ({\mbf s}-{\mbf s}_h)\|_0+\delta^{1/2}\|\nabla (z-z_h)\|_0
\le  C \bigl(M({\mbf s},z)+\delta^{1/2}\left\|\nabla z\right\|_0\bigr).
\end{equation}

In the sequel we set
\begin{equation}
\label{eq:rho}
\rho=\frac{h}{(\nu\delta)^{1/2}},
\end{equation}
so that applying~(\ref{cota_inter}) and (\ref{eq:cota_ellip}) we have the estimate
\begin{equation}\label{eq_cota_final_steady2_orig}
M({\mbf s},z)\le C\Bigl(\rho \nu^{1/2}h^{k'}\|{\mbf s} \|_{k'+1}
+\frac{h^{l'+1}}{\nu^{1/2}}\|z\|_{l'+1} \Bigr).
\end{equation}

To bound $\|r_h\|_0$ we will use the following lemma \cite[Lemma 3]{Burman}, \cite[Lemma 2.1]{John_Novo_PSPG}.
\begin{lema}\label{lema_presion}
For $\psi_h\in Q_h $ it holds
\begin{equation}\label{eq:L2_pre}
\|\psi_h\|_0\le C h\|\nabla \psi_h\|_0+C\sup_{{\bs\chi}_h\in V_h}\frac{(\psi_h,\nabla \cdot {\bs\chi}_h)}{\|{\bs\chi}_h\|_1}.
\end{equation}
\end{lema}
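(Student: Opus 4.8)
The plan is to exploit the fact that, since $Q_h\subset H^1(\Omega)$, any $\psi_h\in Q_h$ lies in $L_0^2(\Omega)$ (after subtracting its mean, which is harmless because both sides of \eqref{eq:L2_pre} are unchanged by adding a constant — the gradient term and the divergence term both annihilate constants). For a function in $L_0^2(\Omega)$ one has the continuous inf-sup (Ne\v{c}as / Bogovski\u\i) inequality: there exists $\bw\in H_0^1(\Omega)^d$ with $\nabla\cdot\bw=\psi_h$ and $\|\bw\|_1\le C\|\psi_h\|_0$. This gives $\|\psi_h\|_0^2=(\psi_h,\nabla\cdot\bw)$, and the whole game is to replace $\bw$ by something in $V_h$.

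First I would introduce a stable interpolant (Scott--Zhang or Clément type) $I_h\bw\in V_h$ satisfying the local estimates $\|\bw-I_h\bw\|_{0,K}\le C h_K\|\nabla\bw\|_{0,\omega_K}$ and $\|I_h\bw\|_1\le C\|\bw\|_1$, where $\omega_K$ is the usual patch around $K$. Then I split
\begin{equation*}
\|\psi_h\|_0^2=(\psi_h,\nabla\cdot\bw)=(\psi_h,\nabla\cdot(\bw-I_h\bw))+(\psi_h,\nabla\cdot I_h\bw).
\end{equation*}
For the second term, since $I_h\bw\in V_h$, we directly bound it by $\|\psi_h\|_0$ times the supremum on the right-hand side of \eqref{eq:L2_pre}, using $\|I_h\bw\|_1\le C\|\bw\|_1\le C\|\psi_h\|_0$. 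For the first term I would integrate by parts on each element $K$ (this is where $\psi_h\in H^1$ is essential):
\begin{equation*}
(\psi_h,\nabla\cdot(\bw-I_h\bw))=\sum_K\Bigl(-(\nabla\psi_h,\bw-I_h\bw)_K+\int_{\partial K}\psi_h\,(\bw-I_h\bw)\cdot\bn\Bigr).
\end{equation*}
The boundary integrals cancel across interior faces because $\psi_h$ and $\bw-I_h\bw$ are both single-valued there (for $\bw-I_h\bw$ this uses $\bw\in H_0^1$, $I_h\bw\in V_h\subset H_0^1$), and vanish on $\partial\Omega$ since $\bw-I_h\bw=0$ there. Hence the first term equals $-(\nabla\psi_h,\bw-I_h\bw)$, which by Cauchy--Schwarz and the elementwise interpolation estimate is bounded by
\begin{equation*}
\|\nabla\psi_h\|_0\,\Bigl(\sum_K C h_K^2\|\nabla\bw\|_{0,\omega_K}^2\Bigr)^{1/2}\le C h\,\|\nabla\psi_h\|_0\,\|\bw\|_1\le C h\,\|\nabla\psi_h\|_0\,\|\psi_h\|_0,
\end{equation*}
using shape-regularity and $h_K\le h$. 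Combining the two bounds gives $\|\psi_h\|_0^2\le \|\psi_h\|_0\bigl(C h\|\nabla\psi_h\|_0+C\sup_{\bs\chi_h}(\psi_h,\nabla\cdot\bs\chi_h)/\|\bs\chi_h\|_1\bigr)$, and dividing by $\|\psi_h\|_0$ finishes the proof.

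The main obstacle is purely bookkeeping: making sure the edge/face terms in the elementwise integration by parts genuinely cancel, which forces the use of a \emph{conforming} interpolant $I_h\bw\in V_h\subset H_0^1(\Omega)^d$ rather than, say, an $L^2$ projection, and forces the argument to be written patchwise with shape-regularity controlling the finite overlap of the patches $\omega_K$. Everything else is a standard combination of the continuous inf-sup condition with approximation and stability of the Scott--Zhang interpolant; since the statement only asks for the generic constant $C$, no sharp tracking of constants is needed.
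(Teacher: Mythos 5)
Your proof is correct and is essentially the standard argument behind the references the paper cites for this lemma (\cite[Lemma 3]{Burman}, \cite[Lemma 2.1]{John_Novo_PSPG}): the continuous inf-sup/Bogovski\u{\i} construction of $\bw$ with $\nabla\cdot\bw=\psi_h$ and $\|\bw\|_1\le C\|\psi_h\|_0$, a stable Cl\'ement/Scott--Zhang interpolant into $V_h$, the two-term splitting, and integration by parts on the interpolation-error term. The only simplifications available are that $Q_h\subset L_0^2(\Omega)$ by the paper's definition (so no mean subtraction is needed) and that, since $\psi_h\in Q_h\subset H^1(\Omega)$ and $\bw-I_h\bw\in H_0^1(\Omega)^d$, the identity $(\psi_h,\nabla\cdot(\bw-I_h\bw))=-(\nabla\psi_h,\bw-I_h\bw)$ holds by a single global integration by parts, so the elementwise face-cancellation bookkeeping can be skipped entirely.
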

Applying (\ref{eq:L2_pre}), (\ref{eq:er1si}) and (\ref{eq_cota_final_steady}) we get
\begin{eqnarray*}
\|r_h\|_0&\le& C h \delta^{-1/2} \delta^{1/2}\|\nabla r_h\|_0+\sup_{{\bs\chi}_h\in V_h}\frac{(r_h,\nabla \cdot {\bs\chi}_h)}{\|{\bs\chi}_h\|_1}\nonumber\\
&\le& C h \delta^{-1/2} \delta^{1/2}\|\nabla r_h\|_0+\nu\|\nabla e_h\|_0+\|T_{1}\|_0\nonumber\\
&\le& C \bigl(( h \delta^{-1/2}+\nu^{1/2})M({\mbf s},z)
+(h+(\nu\delta)^{1/2})\left\|\nabla z\right\|\Bigr).
\end{eqnarray*}
Applying the triangle inequality 
we have
\begin{equation}\label{eq:errorpre_l2}
\|z-z_h\|_0
\le C \nu^{1/2}( 1+\rho)\bigl(M({\mbf s},z)+\delta^{1/2}\left\|\nabla z\right\|_0\bigr).
\end{equation}
To conclude this section we will get a bound for the $L^2$ norm of the error  by means of a well-known duality argument.

\begin{lema}\label{le:duality} There exist a constant~$C>0$ such that
for any $\bv\in H^1_0(\Omega)^d$ with $\hbox{\rm div}(\bv)=0$, $q\in~L^2_0(\Omega)$,  $\bv_h\in V_h$ and~$q_h\in Q_h$ satisfying
\begin{eqnarray}
\label{eq:gal_orthogv}
\nu(\nabla(\bv_h-\bv),\nabla{\bs\chi}_h) + (\nabla (q_h-q),{\bs\chi}_h)=0,\qquad \forall{\bs\chi}_h\in V_h,\\
\label{eq:gal_orthogv2}
(\nabla\cdot(\bv_h-\bv),\psi_h)+\delta(\nabla q_h,\nabla\psi_h)=0,\qquad
\forall \psi_h\in Q_h,
\end{eqnarray}
the following bound holds:
\begin{equation}
\left\| \bv_h-\bv\right\|_0 \le C\left(h\left(\left\|\nabla(\bv-\bv_h)\right\|_0+\nu^{-1}\left\|q-q_h\right\|_0\right)+\delta\left\|\nabla q_h\right\|_0\right).
\label{eq:cota2}
\end{equation}

\end{lema}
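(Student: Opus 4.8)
The plan is to use a standard Aubin--Nitsche duality argument adapted to the stabilized mixed formulation. Let $\be=\bv_h-\bv$ and $r=q_h-q$. Introduce the dual Stokes problem: find $(\bw,\pi)$ with
$$
-\nu\Delta\bw+\nabla\pi=\be,\quad \nabla\cdot\bw=0\ \text{ in }\Omega,\quad \bw=\bs 0\ \text{ on }\partial\Omega,
$$
which, by elliptic regularity on the (polygonal, hence here assumed convex or otherwise regular enough) domain, admits a solution satisfying the a priori bound $\nu\|\bw\|_2+\|\pi\|_1\le C\|\be\|_0$. Then $\|\be\|_0^2=(\be,-\nu\Delta\bw+\nabla\pi)$, and after integrating by parts, using $\nabla\cdot\bv=0$ and $\bv,\bv_h\in H^1_0$, this becomes $\|\be\|_0^2=\nu(\nabla\be,\nabla\bw)-(\nabla\cdot\be,\pi)$. (There is no $(\nabla r,\bw)$-type term yet because we have only tested against the continuous $(\bw,\pi)$.)

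The second step is to insert the discrete test functions. Using the elliptic projection $J_h\bw\in V_h$ and $J_h\pi\in Q_h$, write $\nu(\nabla\be,\nabla\bw)=\nu(\nabla\be,\nabla(\bw-J_h\bw))+\nu(\nabla\be,\nabla J_h\bw)$, and similarly split $\pi$. For the discrete parts we invoke the two orthogonality relations \eqref{eq:gal_orthogv}--\eqref{eq:gal_orthogv2}: from \eqref{eq:gal_orthogv} with ${\bs\chi}_h=J_h\bw$ we get $\nu(\nabla\be,\nabla J_h\bw)=-(\nabla r,J_h\bw)=(r,\nabla\cdot J_h\bw)$; and from \eqref{eq:gal_orthogv2} with $\psi_h=J_h\pi$ we get $-(\nabla\cdot\be,J_h\pi)=\delta(\nabla q_h,\nabla J_h\pi)$. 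Collecting terms, the remaining pieces are consistency/interpolation remainders:
$$
\|\be\|_0^2=\nu(\nabla\be,\nabla(\bw-J_h\bw))-(\nabla\cdot\be,\pi-J_h\pi)+\bigl[(r,\nabla\cdot J_h\bw)-(\nabla\cdot\be,J_h\pi)\bigr]+\delta(\nabla q_h,\nabla J_h\pi).
$$
The bracketed term needs care: using $\nabla\cdot\bw=0$ so $(\nabla\cdot J_h\bw,q)$ is controlled by $\|\nabla(\bw-J_h\bw)\|_0\|q-q_h\|_0$-type quantities after subtracting $(\nabla\cdot\bw,q_h)=0$, while $(\nabla\cdot\be,J_h\pi)$ combines with $-(\nabla\cdot\be,\pi)$ into an interpolation term $\|\nabla\cdot\be\|_0\|\pi-J_h\pi\|_0\le C\|\nabla\be\|_0\,h\|\pi\|_1$.

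The third step is to estimate each term using \eqref{cota_inter} and \eqref{eq:cota_ellip}: $\|\bw-J_h\bw\|_1\le Ch\|\bw\|_2$, $\|\pi-J_h\pi\|_0\le Ch\|\pi\|_1$, and $\|\nabla J_h\pi\|_0\le C\|\pi\|_1$ by \eqref{eq:estabi_eli}. This yields
$$
\|\be\|_0^2\le C\Bigl(\nu h\|\nabla\be\|_0\|\bw\|_2+h\|\nabla\be\|_0\|\pi\|_1+h\|r\|_0\|\bw\|_2+\delta\|\nabla q_h\|_0\|\pi\|_1\Bigr),
$$
and since $\|r\|_0=\|q-q_h\|_0$ and $\|\nabla q_h\|_0$ appear, we bound $\|r\|_0\le C\|q-q_h\|_0$ trivially and use the regularity bound $\nu\|\bw\|_2+\|\pi\|_1\le C\|\be\|_0$ to divide through by $\|\be\|_0$, arriving at \eqref{eq:cota2}. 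The main obstacle I anticipate is the careful bookkeeping of the bracketed pressure--divergence terms — in particular verifying that the non-divergence-free discrete velocity (the right-hand side $-\delta(\nabla q_h,\nabla\psi_h)$ in \eqref{eq:gal_orthogv2}) produces exactly the $\delta\|\nabla q_h\|_0$ contribution and nothing worse, and making sure every cross term is either absorbed into an $h\|\nabla\be\|_0$ factor, an $h\nu^{-1}\|q-q_h\|_0$ factor, or the $\delta\|\nabla q_h\|_0$ factor, with no stray $\|r\|_0$ lacking an $h$. A secondary technical point is that elliptic ($H^2$) regularity for the dual Stokes problem on a general Lipschitz polygon is not automatic; one reads the statement as implicitly assuming the domain is such that this regularity holds (e.g. convex), consistent with the rest of the paper.
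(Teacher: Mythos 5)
Your proposal is correct and follows essentially the same route as the paper's proof: the same dual Stokes problem with $H^2\times H^1$ regularity, the same splitting via the elliptic projections $J_h$ of the dual velocity and pressure, the same use of the two orthogonality relations together with $\nabla\cdot\bw=0$ to convert $(r,\nabla\cdot J_h\bw)$ into an interpolation remainder, and the same final estimates yielding the three terms of \eqref{eq:cota2}.
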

\begin{proof} To prove~(\ref{eq:cota2}), for~${\bs \phi}=\bv-\bv_h$, let $(\bE,Q)$ be the solution of
\begin{equation}\label{eq:dual}
\begin{array}{rclcl}
-\nu\Delta \bE+\nabla Q&=&{\bs \phi},&\quad&{\rm in}\quad \Omega,\\
\nabla \cdot \bE&=&0,&\quad&{\rm in}\quad \Omega,\\
\bE&=&0,&\quad&{\rm on}\quad \partial\Omega.\end{array}
\end{equation}
Since we are assuming $\Omega$ is smooth enough the solution of (\ref{eq:dual}) satisfies
\begin{equation}\label{eq:auxdual2}
\nu\|\bE\|_2+\|Q\|_1\le C\|{\bs \phi}\|_0= C\|\bv-\bv_h\|_0.
\end{equation}
Then, we have
\begin{equation}\label{eq:auxdual1}
\|\bv-\bv_h  \|_0^2=({\bs\phi}, \bv-\bv_h )=\nu(\nabla (\bv-\bv_h),\nabla \bE)-(\nabla \cdot(\bv-\bv_h),Q).
\end{equation}
For the first term on the right-hand side of (\ref{eq:auxdual1}) adding and subtracting $J_h \bE$ and  using (\ref{eq:gal_orthogv})  we get
\begin{align*}
\nu(\nabla (\bv-\bv_h),\nabla \bE)&=\nu(\nabla (\bv-\bv_h),\nabla(\bE-J_h \bE))+\nu(\nabla (\bv-\bv_h),\nabla J_h \bE)
\nonumber\\&=\nu(\nabla (\bv-\bv_h),\nabla(\bE-J_h \bE))+(q-q_h,\nabla\cdot J_h\bE).
\nonumber\\&=\nu(\nabla (\bv-\bv_h),\nabla(\bE-J_h \bE))+(q-q_h,\nabla\cdot (\bE-J_h\bE)).
\end{align*}
Then, applying (\ref{cota_inter}) and (\ref{eq:auxdual2}) we obtain
\begin{align}\label{align1}
\nu(\nabla (\bv-\bv_h),\nabla \bE)&\le \Bigl(\nu\|\nabla (\bv-\bv_h)\|_0
+
C\|q-q_h\|_0\Bigr)\|\nabla(\bE-J_h \bE)\|_0\nonumber\\
&\le
\left(
\|\nabla (\bv-\bv_h)\|_0+C\nu^{-1}\|q-q_h\|_0\right)h\nu\|\bE\|_2
\\
&\le C h\left(
\|\nabla (\bv-\bv_h)\|_0+C\nu^{-1}\|q-q_h\|_0\right)\|\bv-\bv_h\|_0.
\nonumber
\end{align}
For the second term on the right-hand side of (\ref{eq:auxdual1})
we add and subtract $J_h Q$ and apply (\ref{eq:gal_orthogv2})
\begin{align*}
(\nabla  \cdot(\bv-{\bv_h}),Q)&=(\nabla  \cdot(\bv-{\bv_h}),Q-J_h Q)+
(\nabla  \cdot(\bv-{\bv_h}),J_h Q)\nonumber\\
&=(\nabla  \cdot(\bv-{\bv_h}),Q-J_h Q)+\delta(\nabla q_h,\nabla J_h Q).\nonumber
\end{align*}
Applying now (\ref{eq:cota_ellip}) and (\ref{eq:estabi_eli}) together with (\ref{eq:auxdual2}) we get
\begin{align}\label{align2}
(\nabla  \cdot(\bv-{\bv_h}),Q)&\le \|\nabla (\bv-{\bv_h})\|_0\|Q-J_h Q\|_0+\delta \|\nabla q_h\|_0\|\nabla J_h Q\|_0\nonumber\\
&\le C\left(h\|\nabla (\bv-{\bv_h})\|_0
+\delta\|\nabla q_h\|_0\right)\| Q\|_1\\
&\le C\left(h\|\nabla (\bv-{\bv_h})\|_0
+\delta\|\nabla q_h\|_0\right)\| \bv-\bv_h\|_0\nonumber
\end{align}
Inserting (\ref{align1}) and (\ref{align2}) into (\ref{eq:auxdual1}) we reach~(\ref{eq:cota2})
\end{proof}

We now apply~(\ref{eq:cota2}) with~$\bv={\bs s}$, $q=z$, $\bv_h={\bs s}_h$ and~$q_h=z_h$ to get
$$
\left\| {\mbf s}-{\mbf s}_h\right\|_0 \le C\bigl(h\left(\left\|\nabla({\mbf s}-{\mbf s}_h)\right\|_0+\nu^{-1}\left\|z-z_h\right\|_0\right)+\delta\left\|\nabla z_h\right\|_0\bigr).
$$
Applying~(\ref{eq_cota_final_steady_def}) and~(\ref{eq:errorpre_l2}) together with definition (\ref{eq:rho})
we get
\begin{align*}
\left\| {\mbf s}-{\mbf s}_h\right\|_0 &\le C
\left({h}{\nu^{-1/2}}(2+\rho)\left(M({\mbf s},z)+\delta^{1/2}\left\|\nabla z\right\|_0\right)+\delta\left\|\nabla z_h\right\|_0\right)\nonumber\\
&\le C
\left(\rho(2+\rho)\delta^{1/2}\left(M({\mbf s},z)+\delta^{1/2}\left\|\nabla z\right\|_0\right)+\delta\left\|\nabla z_h\right\|_0\right).
\end{align*}
By writing~$\delta\left\|\nabla z_h\right\|\le \delta\left\|\nabla (z_h-z)\right\|
+\delta\left\|\nabla z\right\|$ and applying~(\ref{eq_cota_final_steady_def})
we have
\begin{equation}\label{eq_cota_final_steady_def_l2_b}
\|{\mbf s}-{\mbf s}_h\|_0 \le  C
(1+\rho )^2\delta^{1/2}\left(M({\mbf s},z)+\delta^{1/2}\left\|\nabla z\right\|_0\right),
\end{equation}
and applying~(\ref{eq_cota_final_steady2_orig}),
\begin{align}
\label{eq:una_mas}
\|{\mbf s}-{\mbf s}_h\|_0 \le &C(1+\rho)^2\Bigl({h^{k'+1}}\|{\mbf s}\|_{k'+1}
+\frac{\delta^{1/2}}{\nu^{1/2}} h^{l'+1 }\|z\|_{l'+1}+\delta\|\nabla z\|_0\Bigr),
\end{align}
for~$0\le k'\le k$ and~$0\le l'\le l$.

We notice that in the last bound there are positive  powers of
the parameter~$\rho$. This implies that  in order to have optimal error bounds in the velocity $\rho$ must be bounded above. Hence,
in the sequel, we will assume
\begin{equation}\label{eq:cond_rho}
\rho\le \rho_1,
\end{equation}
for a positive constant  $\rho_1$ which implies
\begin{equation}\label{eq:cond_delta}
\frac{1}{\nu\rho_1^2}h^2\le \delta.
\end{equation}
Assuming~(\ref{eq:cond_rho})
we obtain the following simplified error bounds for $0\le k'\le k$ and $0\le l'\le l$.
\begin{align}\label{eq:error_steady_simplified}
\nu^{1/2}\|\nabla({\mbf s}-{\mbf s}_h)\|_0+\delta^{1/2}\|\nabla(z-z_h)\|_0\le \frac{C}{\nu^{1/2}} &\hat M_1({\mbf s},z),\nonumber\\
\|z-z_h\|_0\le C &\hat M_1({\mbf s},z),\nonumber\\
\|{\mbf s}-{\mbf s}_h\|_0\le C &\hat M_2({\mbf s},z),
\end{align}
where the constants $C$ in the bounds above depend on the value~$\rho_1$ in~(\ref{eq:cond_rho}), and
\begin{eqnarray*}
\hat M_1({\mbf s},z)&=&\nu h^{k'}\|{\mbf s}\|_{k'+1}+h^{l'+1}\|z\|_{l'+1}+(\nu\delta)^{1/2}\|\nabla z\|_0,\\
\hat M_2({\mbf s},z)&=&h^{k'+1}\|{\mbf s}\|_{k'+1}+\nu^{-1}h^{2l'+2}\|z\|_{l'+1}+\delta(\|\nabla z\|_0+\|z\|_{l'+1}),
\end{eqnarray*}
where $\hat M_2$ is otained from~(\ref{eq:una_mas}) by writing $\frac{\delta^{1/2}}{\nu^{1/2}} h^{l'+1 } \le
\frac{\delta}{2} + \frac{h^{2(l'+1)}}{2\nu}$.
We observe that independently of the degree of the piecewise polynomials, in view of condition (\ref{eq:cond_delta}),   we do not achieve more than second order in the $L^2$ norm
of the error of the velocity and first order in the $L^2$ norm of the error of the pressure due to the terms $\delta\|\nabla z\|_0$ and $\delta^{1/2}\|\nabla z\|_0$ respectively.
Using piecewise linear polynomials both in the approximations to the velocity and the pressure (i.e. with $k=l=1$) and assuming $({\mbf s},z)\in H^2(\Omega)^d\times H^1(\Omega)$ (i.e. taking $l'=0$)
we get
\begin{eqnarray}\label{eq:error_steady_simplified_linear}
\nu^{1/2}\|\nabla({\mbf s}-{\mbf s}_h)\|_0+\delta^{1/2}\|\nabla(z-z_h)\|_0&\le& C \frac{h}{\nu^{1/2}}(\nu \|{\mbf s}\|_{2}+\|z\|_1)+C\delta^{1/2}\|z\|_1,\nonumber\\
\|z-z_h\|_0&\le&C h(\nu\|{\mbf s}\|_{2}+\|z\|_1)+C(\nu\delta)^{1/2}\|z\|_1,\\
 \|{\mbf s}-{\mbf s}_h\|_0&\le&C\frac{h^{2}}{\nu}(\nu\|{\mbf s}\|_{2}+\|z\|_{1})+C\delta \|z\|_1,\nonumber
\end{eqnarray}
the constants~$C$ depending on the value~$\rho_1$ in~(\ref{eq:cond_rho}). Here and in the rest of the paper we use $C$ to denote a generic non-dimensional constant.

\section{Evolutionary Stokes equations}
In the rest of the paper we consider the evolutionary Stokes equations
\begin{eqnarray}\label{eq:evo_stokes}
\bv_t-\nu\Delta {\bv}+\nabla q&=&{\mbf g},\quad {\rm in}\quad \Omega\nonumber\\
\nabla \cdot {\bv}&=& 0,\quad {\rm in}\quad \Omega\\
{\bv}&=&{\bs 0},\quad{\rm on}\quad \partial \Omega,\nonumber\\
\bv(0,\bx)&=&\bv_0(\bx),\quad {\rm in}\quad \Omega.\nonumber
\end{eqnarray}
We will introduce a modified Euler non incremental scheme in the first part of this section and we will end the section considering a modified Euler incremental scheme. The error analysis of the second scheme is obtained as a consequence of the error analysis of the first method.
\subsection{Euler non-incremental scheme}
We will denote by $(\bv_h^n,\tilde \bv_h^n,q_h^n)$, $n=1,2,\ldots,$\  $\tilde \bv_h^n\in V_h$, $q_h^n\in Q_h$ and $ \bv_h^n\in V_h+\nabla Q_h$
the approximations to the velocity and pressure at time $t_n=n\Delta t$, $\Delta t=T/N$, $N>0$ obtained with the following modified Euler non-incremental scheme
\begin{eqnarray}\label{eq:eu_non}
&&\left(\frac{\tilde \bv_h^{n+1}-\bv_h^n}{\Delta t},{\bs\chi}_h\right)+\nu(\nabla \tilde \bv_h^{n+1},\nabla {\bs\chi}_h)=({\mbf g}^{n+1},{\bs\chi}_h),\quad \forall {\bs\chi}_h\in V_h\nonumber\\
&&(\nabla \cdot \tilde \bv_h^{n+1},\psi_h)=-\delta(\nabla q_h^{n+1},\nabla \psi_h),\quad \forall \psi_h\in Q_h,\\
&&\bv^{n+1}_h=\tilde \bv_h^{n+1}-\delta\nabla q_h^{n+1}.\nonumber
\end{eqnarray}
Let us observe that for $\delta=\Delta t$,  (\ref{eq:eu_non}) is the classical  Chorin-Temam (Euler non-incremental) scheme \cite{Chorin}, \cite{Temam}. In case $\delta=\Delta t$ we can remove $\bv_h^n$ from (\ref{eq:eu_non}) inserting the expression of $\bv_h^n$ from the last equation in (\ref{eq:eu_non}) into the first equation in (\ref{eq:eu_non}) to get
\begin{eqnarray}\label{eq:eu_non_tilde}
&&\left(\frac{\tilde \bv_h^{n+1}-\tilde \bv_h^n}{\Delta t},{\bs\chi}_h\right)+\nu(\nabla \tilde \bv_h^{n+1},\nabla {\bs\chi}_h)+(\nabla q_h^n,{\bs \chi}_h)=({\mbf g}^{n+1},{\bs\chi}_h),\quad \forall {\bs\chi}_h\in V_h,\qquad\\
&&(\nabla \cdot \tilde \bv_h^{n+1},\psi_h)=-\delta(\nabla q_h^{n+1},\nabla \psi_h),\quad \forall \psi_h\in Q_h.\label{eq:eu_non_tilde2}
\end{eqnarray}
The method we study is exactly (\ref{eq:eu_non_tilde})-(\ref{eq:eu_non_tilde2}) with  $\delta$ a parameter not necessarily equal to $\Delta t$.
More precisely, we suggest to take $\delta$ as defined in (\ref{eq:cond_delta}). Let us observe that in the formulation (\ref{eq:eu_non_tilde})-(\ref{eq:eu_non_tilde2}) we only look for approximations $\tilde \bv_h^n\in V_h$ and $q_h^n\in Q_h$ to the velocity and pressure respectively.
The discrete divergence free approximation $\bv_h^n$ to the velocity is  not part of the scheme. As a consequence of the error analysis of this section we will get the error bounds for the classical Euler non-incremental scheme assuming in that case $\delta=\Delta t$.
\begin{remark}
Let us observe that condition (\ref{eq:eu_non_tilde2}) is analogous to the condition imposed for the pressure stabilized Petrov-Galerkin (PSPG) method to stabilize non inf-sup stable mixed-finite elements, see \cite{John_Novo_PSPG}. The difference is that in the PSPG method
instead of (\ref{eq:eu_non_tilde2}) one has the full residual
\begin{eqnarray}
(\nabla \cdot \tilde \bv_h^{n+1},\psi_h)&=&\delta\sum_{K\in \mathcal{T}_h}\left(({\mbf g}^{n+1},\nabla \psi_h)_K-\left(\frac{\tilde \bv_h^{n+1}-\tilde \bv_h^n}{\Delta t},\nabla \psi_h\right)_K\right.\nonumber\\
&&\quad -(\nu \Delta \tilde \bv_h^{n+1},\psi_h)_K-(\nabla q_h^{n+1},\nabla \psi_h)_K\left.\right)
\end{eqnarray}
so that the PSPG method is consistent, while in (\ref{eq:eu_non_tilde2})  we only keep the last term on the right-hand side above which is the one giving stability  for the approximate pressure. However, due to the lack of consistency no better that $O(h^2)$ error bounds can be obtained for the method (\ref{eq:eu_non_tilde})-(\ref{eq:eu_non_tilde2}). The analogy between the PSPG method and the modified Euler non-incremental scheme applies also to the value of the stabilization parameter $\delta$ which is in general for the PSPG method $\delta \approx h^2$, see \cite{John_Novo_PSPG}. Let us observe that we assume a lower bound for $\delta$  of size $h^2$ in (\ref{eq:cond_delta}) for the method (\ref{eq:eu_non_tilde})-(\ref{eq:eu_non_tilde2}). In view of (\ref{eq:error_steady_simplified_linear}) assuming also an analogous upper bound, i.e. $\delta \approx h^2$, gives an error $O(h)$ for the first two bounds in (\ref{eq:error_steady_simplified_linear}) and $O(h^2)$ for the last one so that assumption $\delta \approx h^2$ equilibrates all terms in (\ref{eq:error_steady_simplified_linear}).
\end{remark}

Let us denote by ${\mbf g}^n={\mbf g}(t_n)$ and $\bv_t^n=\bv_t(t_n)$. Let us consider  $({\mbf s}_h^n,z_h^n)$  the stabilized Stokes approximation to the steady Stokes problem (\ref{eq:stokes}) with
right-hand side $\hat{\mbf  g}^n={\mbf g}^n-\bv_t^n$. Let us observe that  $(\bv^n,p^n)=(\bv(t_n),p(t_n))$, i.e., the solution of the evolutionary Stokes problem (\ref{eq:evo_stokes}) at time $t=t_n$ is also the exact solution of this steady problem.
  More precisely,  $({\mbf s}_h^n,z_h^n)\in V_h\times Q_h$ satisfies
\begin{eqnarray}\label{eq:pro_stokes_evo}
\nu(\nabla {\mbf s_h^n},{\bs\chi}_h)+(\nabla z_h^n,{\bs\chi}_h)&=&(\hat{\mbf g}^n,{\bs\chi}_h),\quad {\bs\chi}_h\in V_h,\\
(\nabla \cdot {\mbf s_h^n},\psi_h)&=&-\delta(\nabla z_h^n,\nabla \psi_h),\quad \forall \psi_h\in Q_h\nonumber.
\end{eqnarray}
In the sequel we will denote by
\begin{equation}\label{eq:defer}
\tilde \be_h^n=\tilde \bv_h^n-{\mbf s}_h^n,\quad  r_h^n=q_h^n-z_h^n.
\end{equation}
From  (\ref{eq:eu_non_tilde})-(\ref{eq:eu_non_tilde2}) and (\ref{eq:pro_stokes_evo}) one obtains the following error equation for all ${\bs\chi}_h\in V_h$, $\psi_h\in Q_h$
\begin{align}\label{eq:er1_orig}
\left(\frac{\tilde \be_h^{n+1}-\tilde \be_h^n}{\Delta t},{\bs\chi}_h\right)+\nu (\nabla \tilde \be_h^{n+1},\nabla {\bs\chi}_h)+(\nabla r_h^n,{\bs\chi}_h)&=({\boldsymbol\tau}_h^n,{\bs\chi}_h)-(\nabla (z^n_h-z_h^{n+1}),{\bs\chi}_h),\qquad\qquad\\
(\nabla \cdot \tilde \be_h^{n+1},\psi_h)+\delta(\nabla r_h^{n+1},\nabla \psi_h)&=0.
\label{eq:er3_orig}
\end{align}
where
\begin{equation}
\label{eq:tau_h}
{\boldsymbol\tau}_h^n=\bv_t^{n+1}-\frac{{\mbf s}_h^{n+1}-{\mbf s}_h^n}{\Delta t}=(\bv_t^{n+1}-({\mbf s_h})_t^{n+1})+\left(({\mbf s_h})_t^{n+1}-\frac{{\mbf s}_h^{n+1}-{\mbf s}_h^n}{\Delta t}\right).
\end{equation}

To estimate the errors $\tilde\be_h^n$ and~$r_h^n$ we will use the following stability result.


\begin{lema}\label{lema_stab_evol} Let $(\bw_h^n)_{n=0}^\infty$
and~$(\bb_h^n)_{n=0}^\infty$ sequences in~$V_h$
and~$(y_h^n)_{n=0}^\infty$ and~$(d_h^n)_{n=0}^\infty$ sequences in
$Q_h$ satisfying for all ${\bs\chi}_h\in V_h$ and $\psi_h\in Q_h$
\begin{align}\label{eq:er1}
\left(\frac{\bw_h^{n+1}-\bw_h^n}{\Delta t},{\bs\chi}_h\right)+\nu (\nabla \bw_h^{n+1},\nabla {\bs\chi}_h)+(\nabla y_h^n,{\bs\chi}_h)=&(\bb_h^n+\nabla d_h^n,{\bs\chi}_h),\\
\label{eq:er3}
(\nabla \cdot \bw_h^{n+1},\psi_h)+\delta(\nabla y_h^{n+1},\nabla \psi_h)=&0.{}\quad
\end{align}
Assume condition
\begin{equation}\label{eq:cond_delta2}
 \Delta t \le \delta
\end{equation}
holds.
Then, for $0\le n_0\le n-1$ there exits a non-dimensional constant $c_0$ such that the following bounds hold
\begin{align}\label{eq:stab_evol_1}
\|\bw_h^{n}\|_0^2+\sum_{j=n_0}^{n-1}\|\bw_h^{j+1}-\bw_h^j&\|_0^2
+\Delta t\sum_{j=n_0}^{n-1}\bigl(\nu\|\nabla\bw_h^{j+1}\|_0^2+{\delta}\|\nabla y_h^{j+1}\|_0^2\bigr)
\nonumber\\
&\le c_0\left(\|\bw_h^{n_0}\|_0^2+ \Delta t\sum_{j=n_0}^{n-1}\left(\nu^{-1}\|\bb_h^j\|_{-1}^2+\delta\|\nabla d_h^j\|_0^2\right)\right).
\end{align}
\begin{align}\label{eq:stab_evol_1t}
t_n\|\bw_h^{n}\|_0^2&+\sum_{j=n_0}^{n-1}{t_{j+1}}\|\bw_h^{j+1}-\bw_h^j\|_0^2
+\Delta t\sum_{j=n_0}^{n-1}t_{j+1}\bigl(\nu\|\nabla\bw_h^{j+1}\|_0^2+{\delta}\|\nabla y_h^{j+1}\|_0^2\bigr)
\nonumber\\
&\le c_0\left(t_{n_0}\|\bw_h^{n_0}\|_0^2+\Delta t\sum_{j=n_0}^{n}\|\bw_h^{j}\|_0^2
+ \Delta t\sum_{j=n_0}^{n-1}
t_{j+1}\left(t_{j+1}\|\bb_h^j\|_{0}^2+\delta\|\nabla d_h^j\|_0^2\right)\right).
\end{align}
\begin{align}
\label{eq:stab_evol_2}
\sum_{j=n_0}^{n-1}{\Delta t}&\biggl\|\frac{\bw_h^{j+1}-\bw_h^{j}}{\Delta t}\biggr\|_0^2+\nu\|\nabla \bw_h^{n}\|_0^2+ \delta\|\nabla y_h^{n}\|_0^2+\nu\sum_{j=n_0}^{n-1}\|\nabla(\bw_h^{j+1}-\bw_h^j)\|_0^2\nonumber\\
&{}\quad\le
c_0\left(\nu\|\nabla \bw_h^{n_0}\|_0^2+\delta\|\nabla y_h^{n_0}\|_0^2
+ \Delta t\sum_{j=n_0}^{n-1}\left(\|\bb_h^j\|_0^2+\|\nabla d_h^j\|_0^2\right)\right).\quad
\end{align}
\end{lema}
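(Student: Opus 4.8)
The plan is to prove the three estimates \eqref{eq:stab_evol_1}, \eqref{eq:stab_evol_1t} and \eqref{eq:stab_evol_2} by standard discrete energy arguments applied to the coupled system \eqref{eq:er1}--\eqref{eq:er3}, exploiting the key algebraic structure that testing \eqref{eq:er1} with $\bw_h^{n+1}$ and \eqref{eq:er3} with $y_h^{n+1}$ (and, for the pressure-gradient term $(\nabla y_h^n,\bw_h^{n+1})$, shifting the index using \eqref{eq:er3} at level $n$) produces a telescoping combination. First I would establish \eqref{eq:stab_evol_1}: take ${\bs\chi}_h=2\Delta t\,\bw_h^{n+1}$ in \eqref{eq:er1}, use the identity $2(a-b,a)=\|a\|_0^2-\|b\|_0^2+\|a-b\|_0^2$ on the time-difference term, and handle the coupling term $2\Delta t(\nabla y_h^n,\bw_h^{n+1})$ by writing it via \eqref{eq:er3} at level $n$ as $-2\Delta t(\nabla\cdot\bw_h^{n+1},y_h^n)=\ldots$; the cleanest route is to combine with $\psi_h=2\delta y_h^{n+1}$ in \eqref{eq:er3} so that $2\Delta t(\nabla\cdot\bw_h^{n+1},y_h^{n+1})=-2\Delta t\,\delta\|\nabla y_h^{n+1}\|_0^2$, leaving a term $2\Delta t(\nabla\cdot\bw_h^{n+1},y_h^{n}-y_h^{n+1})$ that is absorbed using \eqref{eq:cond_delta2} (this is exactly where $\Delta t\le\delta$ is needed: it lets a factor $\Delta t\|\nabla(y_h^{n+1}-y_h^n)\|$-type quantity be controlled by $\delta\|\nabla y_h^{j+1}\|^2$ sums after another application of the error equation, or alternatively a discrete Gronwall absorbs it). The forcing is bounded by $2\Delta t(\bb_h^j+\nabla d_h^j,\bw_h^{j+1})\le 2\Delta t\|\bb_h^j\|_{-1}\|\nabla\bw_h^{j+1}\|_0 + 2\Delta t\|\nabla d_h^j\|_0\|\bw_h^{j+1}\|_0$ (for the $\nabla d_h^j$ term one integrates by parts and uses \eqref{eq:er3} to turn $(\nabla d_h^j,\bw_h^{j+1})$ into a $\delta$-weighted quantity, or simply uses Poincaré), then Young's inequality hides the $\nu\|\nabla\bw_h^{j+1}\|_0^2$ and $\delta\|\nabla y_h^{j+1}\|_0^2$ pieces into the left-hand side; summing over $j=n_0,\dots,n-1$ telescopes and gives \eqref{eq:stab_evol_1}.

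For the weighted estimate \eqref{eq:stab_evol_1t} I would repeat the same computation but test with $2\Delta t\,t_{j+1}\bw_h^{j+1}$; the only new feature is that $2 t_{j+1}(\bw_h^{j+1}-\bw_h^j,\bw_h^{j+1}) = t_{j+1}\|\bw_h^{j+1}\|_0^2 - t_{j+1}\|\bw_h^j\|_0^2 + t_{j+1}\|\bw_h^{j+1}-\bw_h^j\|_0^2$ and the first two terms do not telescope cleanly; one rewrites $t_{j+1}\|\bw_h^j\|_0^2 = t_j\|\bw_h^j\|_0^2 + \Delta t\|\bw_h^j\|_0^2$, so the telescoping leaves behind the extra sum $\Delta t\sum_j\|\bw_h^j\|_0^2$ that appears on the right of \eqref{eq:stab_evol_1t}. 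Everything else (the coupling term, the forcing split with $t_{j+1}\|\bb_h^j\|_0^2$ rather than $\|\bb_h^j\|_{-1}^2$ since now one pairs $\bb_h^j$ with $\bw_h^{j+1}$ directly via Cauchy–Schwarz in $L^2$) goes through identically, again using \eqref{eq:cond_delta2}.

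The third estimate \eqref{eq:stab_evol_2} is the "stronger-norm" one and requires testing \eqref{eq:er1} with $2\Delta t\,(\bw_h^{n+1}-\bw_h^n)/\Delta t = 2(\bw_h^{n+1}-\bw_h^n)$ instead: the time-difference term gives $2\|\bw_h^{j+1}-\bw_h^j\|_0^2/\Delta t \cdot \Delta t$, i.e. $2\Delta t\|(\bw_h^{j+1}-\bw_h^j)/\Delta t\|_0^2$; the viscous term $2\nu(\nabla\bw_h^{j+1},\nabla(\bw_h^{j+1}-\bw_h^j))$ gives, by the same polarization identity, $\nu\|\nabla\bw_h^{j+1}\|_0^2 - \nu\|\nabla\bw_h^j\|_0^2 + \nu\|\nabla(\bw_h^{j+1}-\bw_h^j)\|_0^2$; and the pressure coupling $2(\nabla y_h^j,\bw_h^{j+1}-\bw_h^j) = -2(\nabla\cdot(\bw_h^{j+1}-\bw_h^j),y_h^j)$ is handled through \eqref{eq:er3} at levels $j$ and $j+1$: $(\nabla\cdot\bw_h^{j+1},y_h^j) - (\nabla\cdot\bw_h^j,y_h^j) = -\delta(\nabla y_h^{j+1},\nabla y_h^j) + \delta\|\nabla y_h^j\|_0^2$ (using \eqref{eq:er3} at level $j-1$ for the second piece requires $j\ge n_0+1$; the boundary term at $j=n_0$ is why $\delta\|\nabla y_h^{n_0}\|_0^2$ appears on the right), and $-\delta(\nabla y_h^{j+1},\nabla y_h^j) + \delta\|\nabla y_h^j\|_0^2 = \tfrac{\delta}{2}\|\nabla y_h^{j+1}-\nabla y_h^j\|_0^2 - \tfrac{\delta}{2}\|\nabla y_h^{j+1}\|_0^2 + \tfrac{\delta}{2}\|\nabla y_h^j\|_0^2$ telescopes, leaving the desired $\delta\|\nabla y_h^n\|_0^2$ on the left up to the $\delta\|\nabla y_h^{n_0}\|_0^2$ on the right and a good sign for the $\delta$-jump term. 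The forcing $2(\bb_h^j+\nabla d_h^j,\bw_h^{j+1}-\bw_h^j)$ is bounded by $\tfrac12\Delta t\|(\bw_h^{j+1}-\bw_h^j)/\Delta t\|_0^2 + C\Delta t(\|\bb_h^j\|_0^2+\|\nabla d_h^j\|_0^2)$ after another integration by parts on the $\nabla d_h^j$ term, and the first part is absorbed; summing telescopes and gives \eqref{eq:stab_evol_2}.

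The main obstacle, I expect, is the careful treatment of the pressure-coupling term in all three estimates: the pressure gradient $\nabla y_h^n$ in \eqref{eq:er1} is evaluated at the \emph{old} time level $n$, not $n+1$, so one cannot immediately pair it with the divergence constraint \eqref{eq:er3} (which ties $\bw_h^{n+1}$ to $y_h^{n+1}$). The resolution — writing $y_h^n = y_h^{n+1} - (y_h^{n+1}-y_h^n)$, absorbing the $y_h^{n+1}$ part exactly via \eqref{eq:er3}, and controlling the increment $y_h^{n+1}-y_h^n$ — is precisely the place where the hypothesis $\Delta t\le\delta$ in \eqref{eq:cond_delta2} is indispensable, since it converts the "lag" into a quantity dominated by the dissipation $\Delta t\,\delta\|\nabla y_h^{j+1}\|_0^2$ already present on the left-hand side (possibly at the cost of a discrete Gronwall step to close the argument). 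Keeping track of which boundary terms survive the telescoping (the $\|\bw_h^{n_0}\|_0^2$, $\nu\|\nabla\bw_h^{n_0}\|_0^2$, $\delta\|\nabla y_h^{n_0}\|_0^2$ initial data) and ensuring all constants are genuinely non-dimensional is the remaining bookkeeping.
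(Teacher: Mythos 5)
Your strategy is the paper's: test \eqref{eq:er1} with $\bw_h^{j+1}$ (resp.\ $t_{j+1}\bw_h^{j+1}$, resp.\ $\bw_h^{j+1}-\bw_h^j$), convert the lagged pressure term through the divergence constraint \eqref{eq:er3}, control the resulting pressure increment by a further application of \eqref{eq:er3}, and absorb it via $\Delta t\le\delta$; no Gronwall step is in fact needed. For \eqref{eq:stab_evol_1} and \eqref{eq:stab_evol_1t} your sketch is essentially the paper's proof; the only caveat is that for the source term $(\nabla d_h^j,\bw_h^{j+1})$ you must take the route through \eqref{eq:er3} (giving $2\delta\Delta t(\nabla y_h^{j+1},\nabla d_h^j)$ and hence the weight $\delta\|\nabla d_h^j\|_0^2$ of the statement); the Poincar\'e fallback you mention would not produce that weighting.

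There is, however, a genuine gap in your treatment of \eqref{eq:stab_evol_2}: the pressure-increment term does \emph{not} have a good sign. Carrying your own identities through, the coupling term contributes to the left-hand side
\[
(\nabla y_h^j,\bw_h^{j+1}-\bw_h^j)=\delta\bigl(\nabla y_h^j,\nabla(y_h^{j+1}-y_h^j)\bigr)
=\frac{\delta}{2}\Bigl(\|\nabla y_h^{j+1}\|_0^2-\|\nabla y_h^{j}\|_0^2-\|\nabla (y_h^{j+1}-y_h^{j})\|_0^2\Bigr),
\]
so after telescoping you are left with $-\tfrac{\delta}{2}\sum_j\|\nabla(y_h^{j+1}-y_h^j)\|_0^2$ on the left: a term that must be bounded, not dropped. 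This is precisely where hypothesis \eqref{eq:cond_delta2} enters the third estimate, and your sketch never invokes it there. The repair is one more application of \eqref{eq:er3} with $\psi_h=y_h^{j+1}-y_h^j$, which gives $\delta\|\nabla(y_h^{j+1}-y_h^j)\|_0^2=(\bw_h^{j+1}-\bw_h^j,\nabla(y_h^{j+1}-y_h^j))$ and hence
$\tfrac{\delta}{2}\|\nabla(y_h^{j+1}-y_h^j)\|_0^2\le\tfrac{1}{2\delta}\|\bw_h^{j+1}-\bw_h^j\|_0^2\le\tfrac{\Delta t}{2}\bigl\|(\bw_h^{j+1}-\bw_h^j)/\Delta t\bigr\|_0^2$ by $\Delta t\le\delta$, which is then absorbed into the first term on the left of \eqref{eq:stab_evol_2}. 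Without this step the argument does not close. (Two minor points: the term $2(\nabla d_h^j,\bw_h^{j+1}-\bw_h^j)$ needs no integration by parts here, plain Cauchy--Schwarz and Young suffice; and your observation that the identity for $(\nabla\cdot\bw_h^j,y_h^j)$ uses \eqref{eq:er3} at the lower index is a fair remark about a detail the paper glosses over when $n_0=0$.)
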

\begin{proof}
Taking ${\bs\chi}_h=\Delta t\bw_h^{n+1}$ in~(\ref{eq:er1}) and~$\psi_h=\Delta ty_h^n$  in~(\ref{eq:er3}) we get
\begin{align}\label{eq:er2}
\frac{1}{2}\left(\|\bw_h^{n+1}\|_0^2-\|\bw_h^n\|_0^2+\|\bw_h^{n+1}-\bw_h^n\|_0^2\right)
+\nu\Delta t \|\nabla &\bw_h^{n+1}\|_0^2+\Delta t(\nabla y_h^n,\bw_h^{n+1})\qquad\qquad\nonumber \\
&\le \Delta t(\bb_h^n+\nabla d_h^n,\bw_h^{n+1}),\\
\Delta t(\nabla \cdot \bw_h^{n+1},y_h^n)+\delta \Delta t(\nabla y_h^{n+1},\nabla y_h^n)&=0.
\label{eq:er4}
\end{align}
Summing both equations and noticing
that, after integration by
parts,~$\Delta t(\nabla y_h^n,\bw_h^{n+1})$ in~(\ref{eq:er2}) cancels out with the
term~$\Delta t(\nabla \cdot \bw_h^{n+1},y_h^n)$
in~(\ref{eq:er4}), we
have
$$
\displaylines{
\frac{1}{2}\Bigl(\|\bw_h^{n+1}\|_0^2-\|\bw_h^n\|_0^2+\|\bw_h^{n+1}-\bw_h^n\|_0^2\Bigr)
+\nu\Delta t\|\nabla \bw_h^{n+1}\|_0^2+\delta \Delta t (\nabla y_h^{n+1},\nabla y_h^n)\hfill\cr
\hfill\le \Delta t(\bb_h^n+\nabla d_h^n,\bw_h^{n+1}).
}
$$
Multiplying by~$2$ and
adding and subtracting $\|\nabla y_h^{n+1}\|_0^2$ we get
\begin{align}
\label{eq:er5_orig}
\|\bw_h^{n+1}\|_0^2&-\|\bw_h^n\|_0^2+\|\bw_h^{n+1}-\bw_h^n\|_0^2
+2\nu\Delta t\|\nabla \bw_h^{n+1}\|_0^2+2\delta\Delta t\|\nabla y_h^{n+1}\|_0^2
\nonumber\\
&\le 2\Delta t(\bb_h^n,\bw_h^{n+1})+2\Delta t(\nabla d_h^n,\bw_h^{n+1})
+2\delta \Delta t(\nabla y_h^{n+1},\nabla (y_h^{n+1}-y_h^n)).
\end{align}
From (\ref{eq:er3}) it is also easy to obtain
$$
\delta(\nabla (y_h^{n+1}-y_h^n),\nabla y_h^{n+1})=-(\nabla \cdot( \bw_h^{n+1}-\bw_h^n),y_h^{n+1}),
$$
so that
$$
2\delta\Delta t(\nabla y_h^{n+1},\nabla (y_h^{n+1}-y_h^n))\le
\frac{2}{3}\|\bw_h^{n+1}-\bw_h^n\|_0^2+\frac{3}{2}(\Delta t)^2\|\nabla y_h^{n+1}\|_0^2.
$$
Thus, from~(\ref{eq:er5_orig}) and (\ref{eq:cond_delta2}) we have
\begin{align}\label{eq:er5}
\|\bw_h^{n+1}\|_0^2-\|\bw_h^n\|_0^2+\frac{1}{3}\|\bw_h^{n+1}-\bw_h^n\|_0^2
&+2\nu\Delta t\|\nabla \bw_h^{n+1}\|_0^2+\frac{1}{2}\delta\Delta t\|\nabla y_h^{n+1}\|_0^2
\nonumber\\
&\le 2\Delta t(\bb_h^n,\bw_h^{n+1})+2\Delta t(\nabla d_h^n,\bw_h^{n+1}).
\end{align}
We now bound the two terms on the right-hand side above.
For the first one we write
$$
2\Delta t(\bb_h^n,\bw_h^{n+1}) \le  \frac{\Delta t}{\nu}\|\bb_h^n\|_{-1}^2+\nu\Delta t\|\nabla \bw_h^{n+1}\|_0^2.
$$
For the second one we have $2\Delta t(\nabla d_h^n,\bw_h^{n+1})=-2\Delta t(d_h^n,\nabla\cdot\bw_h^{n+1})$, so that
using (\ref{eq:er3}) with $\psi_h=2\Delta t d_h^n$ we may write
\begin{eqnarray}
\label{eq:cota_segundo_er5}
2\Delta t(\nabla d_h^n,\bw_h^{n+1})=2\delta\Delta t(\nabla y_h^{n+1},\nabla d_h^n)
\le \frac{\delta\Delta t}{4}\|\nabla y_h^{n+1}\|_0^2 +4\delta\Delta t\|\nabla d_h^n\|_0^2.\quad\quad
\end{eqnarray}
Using the two inequalities above in (\ref{eq:er5}) we obtain
\begin{align*}
\|\bw_h^{n+1}\|_0^2-\|\bw_h^n\|_0^2+&\frac{1}{3}\|\bw_h^{n+1}-\bw_h^n\|_0^2
+2\nu\Delta t\|\nabla \bw_h^{n+1}\|_0^2+\frac{1}{2}\delta\Delta t\|\nabla y_h^{n+1}\|_0^2
\nonumber\\
\le& \frac{\Delta t}{\nu}\|\bb_h^n\|_{-1}^2+\nu\Delta t\|\nabla \bw_h^{n+1}\|_0^2
+4\delta\Delta t\|\nabla d_h^n\|_0^2
+\frac{\delta\Delta t}{4}\|\nabla y_h^{n+1}\|_0^2.\qquad
\end{align*}
Arranging terms we get
\begin{align}\label{eq:er6}
\|\bw_h^{n+1}\|_0^2-\|\bw_h^n\|_0^2+\frac{1}{3}\|\bw_h^{n+1}-\bw_h^n\|_0^2
+\nu\Delta t\|\nabla& \bw_h^{n+1}\|_0^2+\frac{1}{4}\delta\Delta t\|\nabla y_h^{n+1}\|_0^2\nonumber\\
&\le \frac{\Delta t}{\nu}\|\bb_h^n\|_{-1}^2+4\delta\Delta t\|\nabla d_h^n\|_0^2,
\end{align}
so that~(\ref{eq:stab_evol_1}) follows easily.

To prove~(\ref{eq:stab_evol_1t}), multiply~(\ref{eq:er5}) by~$t_{n+1}$
and write
$$
t_{n+1}\|\bw_h^n\|_0^2=
t_{n}\|\bw_h^n\|_0^2 + \Delta t\|\bw_h^n\|_0^2.
$$
Use~(\ref{eq:cota_segundo_er5}) to bound the
term~$2 t_{n+1}\Delta t(\nabla d_h^n,\bw_h^{n+1})$,
and for~$2 t_{n+1}\Delta t(\bb_h^n,\bw_h^{n+1})$
use the following bound
$$
2t_{n+1}\Delta t(\bb_h^n,\bw_h^{n+1}) \le  t_{n+1}^{2}\Delta t\|\bb_h^n\|_{0}^2+\Delta t\|
\bw_h^{n+1}\|_0^2,
$$
so that
\begin{align*}
t_{n+1}\|\bw_h^{n+1}\|_0^2&-t_n\|\bw_h^n\|_0^2\\
&+t_{n+1}\Bigl(\frac{1}{3}\|\bw_h^{n+1}-\bw_h^n\|_0^2
+2\nu\Delta t\|\nabla \bw_h^{n+1}\|_0^2+\frac{1}{4}\delta\Delta t\|\nabla y_h^{n+1}\|_0^2\Bigr)\\
&\le t_{n+1}\Delta t \left(t_{n+1}\|\bb_h^n\|_{0}^2+4\delta\|\nabla d_h^n\|_0^2\right)
+\Delta t\left(\|\bw_h^{n}\|_0^2+\|\bw_h^{n+1}\|_0^2\right),
\end{align*}
and~(\ref{eq:stab_evol_1t})
follows by summing consecutive values of~$n$.

To prove~(\ref{eq:stab_evol_2}) we take ${\bs\chi}_h=\bw_h^{n+1}-\bw_h^n$ in~(\ref{eq:er1}).
Then
\begin{equation}\label{eq:er10}
\begin{split}
\Delta t\biggl\|\frac{\bw_h^{n+1}-\bw_h^{n}}{\Delta t}\biggr\|_0^2&+\frac{\nu}{2}\left(\|\nabla \bw_h^{n+1}\|_0^2-\|\nabla \bw_h^n\|_0^2+\|\nabla(\bw_h^{n+1}-\bw_h^n)\|_0^2\right)\\
&+(\nabla y_h^n,\bw_h^{n+1}-\bw_h^{n})\\
&= (\bb_h^n,\bw_h^{n+1}-\bw_h^{n})+( \nabla d_h^n,\bw_h^{n+1}-\bw_h^{n}).
\end{split}
\end{equation}
For the last term on the left-hand side of (\ref{eq:er10}) applying (\ref{eq:er3}) we obtain
\begin{equation}\label{eq:er12}
\begin{split}
(\nabla y_h^n,\bw_h^{n+1}-\bw_h^{n})&=-(y_h^n,\nabla \cdot(\bw_h^{n+1}-\bw_h^n))
=\delta(\nabla y_h^n,\nabla (y_h^{n+1}-y_h^n))\\
&=\frac{\delta}{2}\left(\|\nabla y_h^{n+1}\|_0^2-\|\nabla y_h^n\|_0^2-\|\nabla(y_h^{n+1}-y_h^n)\|_0^2\right).
\end{split}
\end{equation}
We will bound the last term on the right-hand side above applying (\ref{eq:er3}) again:
\begin{align*}
{\delta}\|\nabla (y_h^{n+1}-y_h^n)\|_0^2&=
-(\nabla\cdot(\bw_h^{n+1}-\bw_h^n),y_h^{n+1}-y_h^n)\nonumber\\
&=(\bw_h^{n+1}-\bw_h^n,\nabla (y_h^{n+1}-y_h^n))
\nonumber\\&\le \frac{\delta}{2}\|\nabla (y_h^{n+1}-y_h^n)\|_0^2+\frac{1}{2\delta} \|\bw_h^{n+1}-\bw_h^n\|_0^2,
\end{align*}
so that
$$
\frac{\delta}{2}\|\nabla (y_h^{n+1}-y_h^n)\|_0^2\le \frac{1}{2\delta} \|\bw_h^{n+1}-\bw_h^n\|_0^2
=\frac{\delta}{2} \biggl\|\frac{\bw_h^{n+1}-\bw_h^n}{\delta}\biggr\|_0^2.
$$
Inserting the above inequality into (\ref{eq:er12}) we reach
\begin{eqnarray}\label{eq:er12_2}
(\nabla y_h^n,\bw_h^{n+1}-\bw_h^{n})\ge\frac{\delta}{2}\left(\|\nabla y_h^{n+1}\|_0^2-\|\nabla y_h^n\|_0^2\right)
-\frac{\delta}{2} \biggl\|\frac{\bw_h^{n+1}-\bw_h^n}{\delta}\biggr\|_0^2,
\end{eqnarray}
so that from~(\ref{eq:er10}) it follows that
\begin{equation*}
\begin{split}
2\Delta t\biggl\|\frac{\bw_h^{n+1}-\bw_h^{n}}{\Delta t}\biggr\|_0^2&-\delta\biggl\|\frac{\bw_h^{n+1}-\bw_h^{n}}{\delta}\biggr\|_0^2\\
&+\nu\left(\|\nabla \bw_h^{n+1}\|_0^2-\|\nabla \bw_h^n\|_0^2+\|\nabla(\bw_h^{n+1}-\bw_h^n)\|_0^2\right)\\
&+ \delta\left(\|\nabla y_h^{n+1}\|_0^2-\|\nabla y_h^n\|_0^2\right)\\
&\le 2(\bb_h^n,\bw_h^{n+1}-\bw_h^{n})+2( \nabla d_h^n,\bw_h^{n+1}-\bw_h^{n}).
\end{split}
\end{equation*}
Using from now on that restriction (\ref{eq:cond_delta2}) holds
we get
\begin{equation}\label{eq:er10bis}
\begin{split}
\Delta t\biggl\|\frac{\bw_h^{n+1}-\bw_h^{n}}{\Delta t}\biggr\|_0^2&+\nu\left(\|\nabla \bw_h^{n+1}\|_0^2-\|\nabla \bw_h^n\|_0^2+\|\nabla(\bw_h^{n+1}-\bw_h^n)\|_0^2\right)\\
&+ \delta\left(\|\nabla y_h^{n+1}\|_0^2-\|\nabla y_h^n\|_0^2\right)\\
&\le 2(\bb_h^n,\bw_h^{n+1}-\bw_h^{n})+2( \nabla d_h^n,\bw_h^{n+1}-\bw_h^{n}).
\end{split}
\end{equation}
To conclude we bound the two terms on the right-hand side above. For the first one we write
$$
2(\bb_h^n,\bw_h^{n+1}-\bw_h^{n})\le \frac{\Delta t}{4}\biggl\|\frac{\bw_h^{n+1}-\bw_h^{n}}{\Delta t}\biggr\|_0^2+4\Delta t\|\bb_h^n\|_0^2,
$$
and for the second one,
$$
2( \nabla d_h^n,\bw_h^{n+1}-\bw_h^{n})
\le \frac{\Delta t}{4}\biggl\|\frac{\bw_h^{n+1}-\bw_h^{n}}{\Delta t}\biggr\|_0^2+4\Delta t\|\nabla d_h^n\|_0^2.\nonumber
$$
Using these two bounds
in~(\ref{eq:er10bis}) we reach
\begin{equation*}
\begin{split}
\frac{\Delta t}{2}\biggl\|\frac{\bw_h^{n+1}-\bw_h^{n}}{\Delta t}\biggr\|_0^2&+\nu\left(\|\nabla \bw_h^{n+1}\|_0^2-\|\nabla \bw_h^n\|_0^2+\|\nabla(\bw_h^{n+1}-\bw_h^n)\|_0^2\right)\\
&+\delta\left(\|\nabla y_h^{n+1}\|_0^2-\|\nabla y_h^n\|_0^2\right)\\
&\le 4\Delta t\|\bb_h^n\|_0^2+4\Delta t\|\nabla d_h^n\|_0^2,
\end{split}
\end{equation*}
from where~(\ref{eq:stab_evol_2}) follows easily.
\end{proof}

\begin{remark} At the price of a more elaborate proof, it is possible
to replace condition~(\ref{eq:cond_delta2}) by~$\Delta t \le 2\delta$.
\end{remark}

We now prove a bound for the error in the velocity and pressure in the approximation defined by (\ref{eq:eu_non_tilde})-(\ref{eq:eu_non_tilde2}). We assume the solution
$(\bv,q)$ of (\ref{eq:evo_stokes}) is smooth enough so that all the norms appearing below on the right-hand side of the bounds in Theorem~\ref{Th1} are bounded.
\begin{theorem}\label{Th1} Let $(\bv,q)$ be the solution of (\ref{eq:evo_stokes}) and let $(\tilde \bv_h^n,q_h^n)$, $n\ge 1$,  be the solution of (\ref{eq:eu_non_tilde})-(\ref{eq:eu_non_tilde2}). Assume $\delta$ satisfies condition (\ref{eq:cond_delta}) and $\Delta t$ satisfies condition (\ref{eq:cond_delta2}).
Then, the following bounds hold
\begin{equation}\label{cota_th1_l2}
\begin{split}
\|\tilde \bv_h^n-\bv(t_n)\|_0^2\le C\|\tilde\be_h^0\|_0^2&+C\frac{h^4}{\nu^2}\left(\nu^2\|\bv(t_n)\|_2^2+\|q(t_n)\|_1^2\right)+C\delta^2\|q(t_n)\|_1^2\\
&+C_1^n t_n {\Delta t}^2+C_2^n t_n h^4+C_3^n(\nu\lambda)^{-1} t_n\delta^2,
\end{split}
\end{equation}
\begin{equation}\label{cota_th1_H1}
\begin{split}
\Delta t\sum_{j=1}^{n}\bigl(\nu\|\nabla (\tilde \bv_h^j&-\bv(t_j))\|_0^2+\delta \|\nabla (q_h^j-q(t_j))\|_0^2\bigr)
\\
&\le C\|\tilde\be_h^0\|_0^2
+C t_n\frac{h^2}{\nu}\left(\nu^2\max_{t_1\le t\le t_n}\left(\|\bv(t)\|_2^2+\|q(t)\|_1^2\right)\right)\\
&+Ct_n\delta\max_{t_1\le t\le t_n}\|q(t)\|_1^2+C_1^n t_n {\Delta t}^2+C_2^n t_n h^4+C_3^n(\nu\lambda)^{-1} t_n \delta^2,
\end{split}
\end{equation}
where $C_1^n$ and $C_2^n$ are defined as
\begin{eqnarray}
C_1^n&=&C\left(\frac{c_{-1}^2}{\nu\lambda}\max_{0\le t\le t_n}\|({\mbf s}_h)_{tt}(t)\|_0^2+\delta\max_{0\le t\le t_n}\|\nabla (z_h)_t(t)\|_0^2\right),\label{laC1}\\
C_2^n&=&\frac{C}{\nu^3\lambda}\left(\nu^2\max_{t_1\le t\le t_n}\left(\|\bv_t(t)\|_2^2+\|q_t(t)\|_1^2\right)\right),\label{laC2}\\
C_3^n&=&{C}\left(\max_{t_1\le t\le t_n}\|q_t(t)\|_1^2\right)\label{laC3}.
\end{eqnarray}
\end{theorem}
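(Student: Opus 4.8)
The plan is to apply the stability Lemma~\ref{lema_stab_evol} to the error equations \eqref{eq:er1_orig}--\eqref{eq:er3_orig} and then combine the resulting bounds on $\tilde\be_h^n$ and $r_h^n$ with the steady-state estimates \eqref{eq:error_steady_simplified_linear} for $({\mbf s}_h^n,z_h^n)$ via the triangle inequality. Comparing \eqref{eq:er1_orig}--\eqref{eq:er3_orig} with \eqref{eq:er1}--\eqref{eq:er3}, we identify $\bw_h^n=\tilde\be_h^n$, $y_h^n=r_h^n$, $\bb_h^n={\boldsymbol\tau}_h^n$ and $\nabla d_h^n=-\nabla(z_h^n-z_h^{n+1})$, i.e.\ $d_h^n=z_h^{n+1}-z_h^n$. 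The hypothesis \eqref{eq:cond_delta2} $\Delta t\le\delta$ is exactly \eqref{eq:cond_delta2} in the theorem, so the lemma applies. For the $L^2$ velocity bound \eqref{cota_th1_l2} I would use \eqref{eq:stab_evol_1} (taking $n_0=0$), and for the $H^1$-in-space, $L^2$-in-time bound \eqref{cota_th1_H1} I would use \eqref{eq:stab_evol_1} as well, since its left-hand side already contains $\Delta t\sum(\nu\|\nabla\bw_h^{j+1}\|_0^2+\delta\|\nabla y_h^{j+1}\|_0^2)$.

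The second step is to estimate the two data terms appearing on the right-hand side of \eqref{eq:stab_evol_1}, namely $\Delta t\sum_j\nu^{-1}\|{\boldsymbol\tau}_h^j\|_{-1}^2$ and $\Delta t\sum_j\delta\|\nabla d_h^j\|_0^2$. For the truncation error ${\boldsymbol\tau}_h^j$, I would use the splitting already displayed in \eqref{eq:tau_h}: the first piece $\bv_t^{j+1}-({\mbf s}_h)_t^{j+1}$ is the time-derivative of the steady Stokes error $\bv-{\mbf s}_h$ (since $({\mbf s}_h,z_h)$ solves a steady problem with data depending smoothly on $t$, one differentiates \eqref{eq:pro_stokes_evo} in $t$ and applies the steady estimate \eqref{eq:error_steady_simplified_linear} to $(\bv_t,q_t)$), giving a contribution of order $h^2$ and $\delta$ controlled by $C_2^n$ and $C_3^n$; the second piece $({\mbf s}_h)_t^{j+1}-({\mbf s}_h^{j+1}-{\mbf s}_h^j)/\Delta t$ is a standard finite-difference consistency error of order $\Delta t$ in terms of $\|({\mbf s}_h)_{tt}\|$, bounded using \eqref{eq:cota_menos1} to pass to the $-1$ norm, yielding $C_1^n$. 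For the term $\Delta t\sum_j\delta\|\nabla d_h^j\|_0^2=\Delta t\sum_j\delta\|\nabla(z_h^{j+1}-z_h^j)\|_0^2$, I would again differentiate the pressure equation in time and write $z_h^{j+1}-z_h^j=\int_{t_j}^{t_{j+1}}(z_h)_t\,dt$, so that $\delta\|\nabla(z_h^{j+1}-z_h^j)\|_0^2\le\delta\,\Delta t\int_{t_j}^{t_{j+1}}\|\nabla(z_h)_t\|_0^2\,dt$, contributing the $\delta$-part of $C_1^n$. Summing over $j$ turns one factor $\Delta t$ into $t_n$, which explains the $t_n$ multiplying $C_1^n,C_2^n,C_3^n$ in the statement.

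The third step is to bound $\|({\mbf s}_h)_t\|$, $\|({\mbf s}_h)_{tt}\|$ and $\|\nabla(z_h)_t\|$ appearing in $C_1^n,C_2^n,C_3^n$; here one uses that $({\mbf s}_h)_t$ solves the stabilized Stokes problem \eqref{eq:pro_stokes_evo} with right-hand side $\bv_{tt}-\nu\Delta\bv_t+\nabla q_t$, hence is controlled by $\|\bv_t\|$-type norms of the exact solution by the steady stability estimates, plus the obvious $a\,priori$ bound $\nu\|\nabla({\mbf s}_h)_t\|_0^2+\delta\|\nabla(z_h)_t\|_0^2\le C\|\hat{\mbf g}_t\|_{-1}^2$ obtained by testing with $({\mbf s}_h)_t$, $(z_h)_t$. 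Finally, for the leading (non-$t_n$) terms in \eqref{cota_th1_l2}--\eqref{cota_th1_H1} one inserts the steady estimates \eqref{eq:error_steady_simplified_linear} at the single time level $t_n$ (for \eqref{cota_th1_l2}) or maximized over $[t_1,t_n]$ (for \eqref{cota_th1_H1}) into the triangle inequality $\|\tilde\bv_h^n-\bv(t_n)\|_0\le\|\tilde\be_h^n\|_0+\|{\mbf s}_h^n-\bv(t_n)\|_0$, and likewise for the pressure. The main obstacle is the bookkeeping in step two: correctly distributing the powers of $h$, $\nu$, $\delta$, $\Delta t$ and $\lambda$ so that the constants come out exactly as $C_1^n,C_2^n,C_3^n$ in \eqref{laC1}--\eqref{laC3} — in particular keeping track of where the $(\nu\lambda)^{-1}$ and $c_{-1}^2/(\nu\lambda)$ factors enter through the Poincar\'e-type inequalities \eqref{eq:cota_menos1}--\eqref{eq:poincare} when converting $\|\cdot\|_0$ data bounds into the $\|\cdot\|_{-1}$ norm demanded by \eqref{eq:stab_evol_1}, and ensuring that the use of the steady-state bounds is legitimate under the constraint \eqref{eq:cond_delta} on $\rho$.
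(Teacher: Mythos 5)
Your proposal follows essentially the same route as the paper: apply Lemma~\ref{lema_stab_evol} (estimate \eqref{eq:stab_evol_1} with $n_0=0$) to the error equations \eqref{eq:er1_orig}--\eqref{eq:er3_orig} with $\bw_h^n=\tilde\be_h^n$, $y_h^n=r_h^n$, $d_h^n=z_h^{n+1}-z_h^n$, bound the truncation term via the splitting \eqref{eq:tau_h} together with \eqref{eq:cota_menos1} and H\"older, control $\bv_t-({\mbf s}_h)_t$ by the time-differentiated steady estimates \eqref{eq:error_steady_simplified_linear}, and finish with the triangle inequality. The only cosmetic difference is that the paper takes $\bb_h^n=P_{V_h}{\boldsymbol\tau}_h^n$ (so that $\bb_h^n\in V_h$ as the lemma requires) rather than ${\boldsymbol\tau}_h^n$ itself, which changes nothing in the estimates.
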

\begin{proof} We apply Lemma~\ref{lema_stab_evol} to relation~(\ref{eq:er1_orig})-(\ref{eq:er3_orig}), that is,
taking~$\bw_h^n=\tilde\be_h^n$, $y_h^n=r_h^n$, $\bb_h^n=P_{V_h}{\boldsymbol\tau}_h^n$
and~$d_h^n=z_h^{n+1} - z_h^n$, where $P_{V_h}$ is the $L^2$ orthogonal projection onto $V_h$. As a consequence
of~(\ref{eq:stab_evol_1}) we have
\begin{equation}\label{eq:err_evol_a}
\begin{split}
\|\tilde\be_h^{n}\|_0^2&+\sum_{j=n_0}^{n-1}\|\tilde\be_h^{j+1}-\tilde\be_h^j\|_0^2
+\Delta t\sum_{j=n_0}^{n-1}\bigl(\nu\|\nabla\tilde\be_h^{j+1}\|_0^2+{\delta}\|\nabla r_h^{j+1}\|_0^2\bigr)\\
&\le c_0\left(\|\tilde\be_h^{n_0}\|_0^2+ \Delta t\sum_{j=n_0}^{n-1}\left(\nu^{-1}\|P_{V_h}{\boldsymbol\tau}_h^j\|_{-1}^2+\delta\|\nabla (z_h^{j+1}-z_h^j)\|_0^2\right)\right).
\end{split}
\end{equation}
We now estimate the last two terms  on the right-hand side above. For the second one we have
\begin{equation}\label{eq:preer13}
\left\| \nabla (z_h^{j+1}-z_h^{j})\right\|_0^2=\bigg\|\int_{t_j}^{t_{j+1}}\nabla (z_h)_t~dt\biggr\|_0^2\le \Delta t\int_{t_j}^{t_{j+1}}\|\nabla (z_h)_t\|_0^2~dt,
\end{equation}
where in the last inequality we have applied H\"older's inequality. Thus we
can write
\begin{equation}\label{eq:er13}
\sum_{j=0}^{n-1}\delta\left\| \nabla (z_h^{j+1}-z_h^{j})\right\|_0^2
\le \delta \Delta t\int_{0}^{t_n}\|\nabla (z_h)_t\|_0^2~dt.
\end{equation}
To estimate the truncation error we first consider the second term in the expression of~${\boldsymbol\tau}_h^j$ in~(\ref{eq:tau_h}).
Applying H\"older's inequality we may write
\begin{eqnarray}\label{eq:trunca}
\left\|({\mbf s_h})_t^{j+1}-\frac{{\mbf s}_h^{j+1}-{\mbf s}_h^j}{\Delta t}\right\|_0^2
=\biggl\|\frac{1}{\Delta t}\int_{t_j}^{t_{j+1}} (t_{j}-s) ({\mbf s}_h)_{tt}\,dt\biggr\|_0^2
\le
\Delta t\int_{t_j}^{t_{j+1}} \left\|({\mbf s}_h)_{tt}\right\|_0^2\,dt,\qquad\quad
\end{eqnarray}
so that, recalling~(\ref{eq:cota_menos1}) and applying (\ref{eq:trunca}) we obtain
\begin{equation}\label{eq:er8-1}
\begin{split}
\|P_{V_h}{\boldsymbol\tau}_h^j\|_{-1}^2&\le \frac{c_{-1}^2}{\lambda}\|P_{V_h}{\boldsymbol\tau}_h^j\|_0^2 \le  \frac{c_{-1}^2}{\lambda}\|{\boldsymbol\tau}_h^j\|_0^2\\
&\le 2 \frac{c_{-1}^2}{\lambda}\biggl(
\|\bv_t^{j+1}-({\mbf s_h})_t^{j+1}\|_0^2+\Delta t\int_{t_j}^{t_{j+1}}
\|({\mbf s}_h)_{tt}\|_0^2~dt\biggr),
\end{split}
\end{equation}
which allow us to write
\begin{equation}\label{eq:er8}
\Delta t\sum_{j=0}^{n-1}\|P_{V_h}{\boldsymbol\tau}_h^j\|_{-1}^2
\le 2c_{-1}^2\biggl(\frac{\Delta t^2}{\lambda} \int_{0}^{{t_n}}
\|({\mbf s}_h)_{tt}\|_0^2~dt +\frac{t_n}{\lambda} \max_{t_1\le t\le t_n}
\|\bv_t(t)-({\mbf s_h})_t(t)\|_0^2\biggr).
\end{equation}
Thus, inserting (\ref{eq:er13}) and (\ref{eq:er8}) in (\ref{eq:err_evol_a}) and taking $n_0=0$ it follows that
\begin{align*}\label{eq:err_evol_b}
\|\tilde\be_h^{n}\|_0^2+\sum_{j=0}^{n-1}\|\tilde\be_h^{j+1}-\tilde\be_h^j\|_0^2&
+\Delta t\sum_{j=0}^{n-1}\bigl(\nu\|\nabla\tilde\be_h^{j+1}\|_0^2+{\delta}\|\nabla r_h^{j+1}\|_0^2\bigr)
\nonumber\\
\le&c_0\left(\|\tilde\be_h^{0}\|_0^2+{\Delta t}^2\int_0^{t_n} \Bigl(\frac{c_{-1}^2}{\nu\lambda}\|({\mbf s}_h)_{tt}\|_0^2
+\delta\|\nabla (z_h)_t\|_0^2\Bigr)\,dt\right.
\nonumber\\
&{}\left.+c_{-1}^2\frac{t_n}{\nu\lambda}\max_{t_1\le t\le t_n}
\|\bv_t(t)-({\mbf s_h})_t(t)\|_0^2\right).
\end{align*}
Now, in view of~(\ref{eq:error_steady_simplified}) we can write
\begin{equation}\label{eq:er9}
\begin{split}
\|\tilde\be_h^{n}\|_0^2&+\sum_{j=0}^{n-1}\|\tilde\be_h^{j+1}-\tilde\be_h^j\|_0^2
+\Delta t\sum_{j=0}^{n-1}\bigl(\nu\|\nabla\tilde\be_h^{j+1}\|_0^2+{\delta}\|\nabla r_h^{j+1}\|_0^2\bigr)
\\
&\le c_0\left(\|\tilde\be_h^{0}\|_0^2+{\Delta t}^2\int_0^{t_n} \Bigl(\frac{c_{-1}^2}{\nu\lambda}\|({\mbf s}_h)_{tt}\|_0^2+\delta\|\nabla (z_h)_t\|_0^2\Bigr)\,dt\right.\\
&\quad+\frac{C}{\nu^{3}\lambda}t_n\Bigl( \nu^2 h^{2k+2}\max_{t_1\le t\le t_n}\|\bv_t(t)\|_{k+1}^2+C h^4\max_{t_1\le t\le t_n}\| q_t(t)\|_1^2\Bigr)\\
&\quad+\left.\frac{C}{\nu\lambda}t_n \delta^2\max_{t_1\le t\le t_n}\|q_t(t)\|_{1}^2\right).
\end{split}
\end{equation}
Taking $k=1$ and $l=0$ in (\ref{eq:er9}), applying triangle inequality and the error bounds (\ref{eq:error_steady_simplified_linear}) we conclude
(\ref{cota_th1_l2}) and (\ref{cota_th1_H1}).
\end{proof}
\begin{remark}
We observe that the norms $\|({\mbf s}_h)_{tt}\|_0$ and $\delta^{1/2}\|\nabla (z_h)_t\|_0$ in (\ref{laC1}) can be easily bounded in terms of
$\|\bv_{tt}\|_1$ and $\|q_t\|_1$ by adding and subtracting $\bv_{tt}$ and $\nabla (q_t)$, respectively, and applying (\ref{eq:error_steady_simplified}).
\end{remark}

\begin{remark}
Let us observe that taking $\delta=\Delta t$ the analysis above applies to  the standard Euler non-incremental scheme assuming
\begin{equation}\label{eq:cond_pidomask}
\frac{1}{\nu\rho_1^2} h^2\le \Delta t.
\end{equation}
This result is in agreement with the
error bounds in \cite{badia-codina} where the authors prove error bounds for the Euler non-incremental scheme for inf-sup stable elements assuming $\Delta t\ge C h^2$, see \cite[Assumption 7]{badia-codina}. It is also in agreement with the classical results for the continuous in space Euler non-incremental method (see for example \cite{Guermond_overview}) since for $\delta=\Delta t$ the rate of convergence in terms of $\Delta t$ in the $L^2$ norm of the velocity is one and the rate of convergence in the $H^1$ norm of the velocity and the $L^2$ norm of the pressure is one half, see (\ref{cota_th1_l2})-(\ref{cota_th1_H1}).

Let us also observe that condition (\ref{eq:cond_pidomask}) is stronger than condition (\ref{eq:cond_delta2}),
 $\Delta t \le \delta$.  As a consequence, the modified Euler non-incremental scheme with $\delta$  different from $\Delta t$ would be advisable if one wants to use the method for $\Delta t\rightarrow 0$ since there is no need in the modified method to impose  (\ref{eq:cond_pidomask}) for the
time step $\Delta t$.  Moreover, the error analysis carried out explains the instabilities that can be observed in the approximate pressures computed with the standard Euler non-incremental scheme for a fixed $h$ and  $\Delta t$ tending to zero in case of using non inf-sup stable elements, see for example \cite{codina}. In that case, the lower bound in (\ref{eq:cond_pidomask}) is not satisfied and the stability for the pressure induced by equation (\ref{eq:eu_non_tilde2}) disappears. This is in agreement with the analogies stated in Remark 1 between the Euler non-incremental scheme and the PSPG method.
\end{remark}

\begin{remark} It must be observed that the time step restricition~(\ref{eq:cond_delta2}) is not an artifact of the proof but, as it can be easily checked in practice, the modified Euler non-incremental method becomes unstable if $\Delta t$
is taken larger than $2\delta$.

\end{remark}

We now turn to estimate the error in the pressure. We first notice that
we already have an estimate of the form
$$
\Delta t\sum_{j=1}^{n}\delta\|\nabla (q_h^{j}-q(t_j))\|_0^2=\mathcal{O}(\Delta t^2+h^2+\delta)
$$
from~(\ref{cota_th1_H1}).  However, we will obtain error bounds for stronger norms than this one.
\begin{lema}
Under the assumptions of Theorem~\ref{Th1}
the following bound holds
\begin{equation}\label{cota_pre_delta_1}
\begin{split}
\delta \|\nabla (q_h^n-q(t_n))\|_0^2&\le C\Bigl(\nu\|\nabla\tilde \be_h^0\|_0^2+\delta\|\nabla r_h^0\|_0^2+C_4^n t_n \Delta t^2+C_5^n t_n h^4+C_3^n t_n \delta\\
&\quad+\frac{h^2}{\nu}
\left(\nu^2\|\bv(t_n)\|_2^2+\|q(t_n)\|_1^2\right)+\delta\|q(t_n)\|_1^2 \Bigr),
\end{split}
\end{equation}
where $C_3^n$ is the constant in (\ref{laC3}) and
\begin{eqnarray}
C_4^n&=&\max_{0\le t\le t_n}\|({\mbf s}_h)_{tt}(t)\|_0^2+\max_{0\le t\le t_n}\|\nabla (z_h)_t(t)\|_0^2\label{laC4}\\
C_5^n&=&\frac{1}{\nu^2}\left(\nu^2\max_{t_1\le t\le t_n}\|\bv_t(t)\|_2^2+\max_{t_1\le t\le t_n}\|q_t\|_1^2\right).\label{laC5}
\end{eqnarray}
\end{lema}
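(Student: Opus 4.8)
The plan is to reduce the estimate for the pressure error to an estimate for $\delta\|\nabla r_h^n\|_0^2$, where $r_h^n=q_h^n-z_h^n$ is the quantity appearing in the error equation (\ref{eq:er1_orig})--(\ref{eq:er3_orig}). Indeed, by the triangle inequality $\delta\|\nabla(q_h^n-q(t_n))\|_0^2\le 2\delta\|\nabla r_h^n\|_0^2+2\delta\|\nabla(z_h^n-q(t_n))\|_0^2$, and since $({\mbf s}_h^n,z_h^n)$ is the stabilized Stokes approximation (\ref{eq:pro_stokes_evo}) whose exact solution is $(\bv(t_n),q(t_n))$, the first bound in (\ref{eq:error_steady_simplified_linear}), squared, gives $\delta\|\nabla(z_h^n-q(t_n))\|_0^2\le C\frac{h^2}{\nu}(\nu^2\|\bv(t_n)\|_2^2+\|q(t_n)\|_1^2)+C\delta\|q(t_n)\|_1^2$, which accounts for the last two terms in (\ref{cota_pre_delta_1}). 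So the whole task is to bound $\delta\|\nabla r_h^n\|_0^2$.

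To do so I would apply Lemma~\ref{lema_stab_evol}, this time invoking the third stability estimate (\ref{eq:stab_evol_2}), to the error relation (\ref{eq:er1_orig})--(\ref{eq:er3_orig}) with exactly the identifications used in the proof of Theorem~\ref{Th1}, namely $\bw_h^n=\tilde\be_h^n$, $y_h^n=r_h^n$, $\bb_h^n=P_{V_h}{\boldsymbol\tau}_h^n$ and $d_h^n=z_h^{n+1}-z_h^n$; this is legitimate since the hypotheses of Theorem~\ref{Th1} include (\ref{eq:cond_delta2}). Taking $n_0=0$, estimate (\ref{eq:stab_evol_2}) yields $\delta\|\nabla r_h^n\|_0^2\le c_0\bigl(\nu\|\nabla\tilde\be_h^0\|_0^2+\delta\|\nabla r_h^0\|_0^2+\Delta t\sum_{j=0}^{n-1}(\|P_{V_h}{\boldsymbol\tau}_h^j\|_0^2+\|\nabla(z_h^{j+1}-z_h^j)\|_0^2)\bigr)$, so it only remains to estimate the two sums, and this uses the same ingredients as in Theorem~\ref{Th1} but with $L^2$ instead of $H^{-1}$ norms of ${\boldsymbol\tau}_h^j$.

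For the sum involving $d_h^j$ I would argue as in (\ref{eq:preer13}), $\|\nabla(z_h^{j+1}-z_h^j)\|_0^2\le\Delta t\int_{t_j}^{t_{j+1}}\|\nabla(z_h)_t\|_0^2\,dt$, hence $\Delta t\sum_{j=0}^{n-1}\|\nabla(z_h^{j+1}-z_h^j)\|_0^2\le\Delta t^2 t_n\max_{0\le t\le t_n}\|\nabla(z_h)_t(t)\|_0^2$. For the sum involving ${\boldsymbol\tau}_h^j$ I would split it as in (\ref{eq:tau_h}), use $\|P_{V_h}{\boldsymbol\tau}_h^j\|_0\le\|{\boldsymbol\tau}_h^j\|_0$, bound the time-discretization part by (\ref{eq:trunca}) (contributing $2\Delta t^2 t_n\max_{0\le t\le t_n}\|({\mbf s}_h)_{tt}(t)\|_0^2$ after summation by $\Delta t$), and bound the spatial part by noting, exactly as in the proof of Theorem~\ref{Th1}, that $(({\mbf s}_h)_t,(z_h)_t)$ is the stabilized Stokes approximation with exact solution $(\bv_t,q_t)$; the last bound in (\ref{eq:error_steady_simplified_linear}), squared, then gives $\|\bv_t-({\mbf s}_h)_t\|_0^2\le C\frac{h^4}{\nu^2}(\nu^2\|\bv_t\|_2^2+\|q_t\|_1^2)+C\delta^2\|q_t\|_1^2$, so that $\Delta t\sum_{j=0}^{n-1}\|P_{V_h}{\boldsymbol\tau}_h^j\|_0^2\le 2\Delta t^2 t_n\max\|({\mbf s}_h)_{tt}\|_0^2+C t_n h^4 C_5^n+C t_n\delta^2\max_{t_1\le t\le t_n}\|q_t(t)\|_1^2$ with $C_5^n$ as in (\ref{laC5}).

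Collecting these bounds, grouping the two $\Delta t^2 t_n$ terms into $C_4^n t_n\Delta t^2$ with $C_4^n$ as in (\ref{laC4}), and using that $\delta$ is bounded above by a constant (so $\delta^2\le C\delta$) to rewrite the $\delta^2$ term as $C_3^n t_n\delta$ with $C_3^n$ as in (\ref{laC3}), gives $\delta\|\nabla r_h^n\|_0^2\le C\bigl(\nu\|\nabla\tilde\be_h^0\|_0^2+\delta\|\nabla r_h^0\|_0^2+C_4^n t_n\Delta t^2+C_5^n t_n h^4+C_3^n t_n\delta\bigr)$; combining with the steady-state estimate from the first paragraph yields (\ref{cota_pre_delta_1}). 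The only mildly delicate points are that one must use the $\|\cdot\|_0$-based estimate (\ref{eq:stab_evol_2}) rather than the $\|\cdot\|_{-1}$-based ones of Lemma~\ref{lema_stab_evol} (which is why $C_4^n$ and $C_5^n$ carry no negative powers of the first Dirichlet eigenvalue $\lambda$, unlike $C_1^n$ and $C_2^n$), and keeping careful track of the powers of $\nu$ and $\delta$ when passing from (\ref{eq:error_steady_simplified_linear}) to the squared estimates; everything else is bookkeeping already performed in the proof of Theorem~\ref{Th1}.
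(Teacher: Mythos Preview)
Your proposal is correct and follows essentially the same route as the paper: apply the third stability estimate~(\ref{eq:stab_evol_2}) of Lemma~\ref{lema_stab_evol} to the error equations~(\ref{eq:er1_orig})--(\ref{eq:er3_orig}) with $n_0=0$, bound the sums via~(\ref{eq:preer13}), (\ref{eq:trunca}) and the $L^2$ estimate in~(\ref{eq:error_steady_simplified_linear}) for $\bv_t-({\mbf s}_h)_t$, and finish by the triangle inequality with~(\ref{eq:error_steady_simplified_linear}). The one small point worth flagging is your passage from a $\delta^2$ term (which is what squaring the last estimate in~(\ref{eq:error_steady_simplified_linear}) actually gives) to the $\delta$ term in~(\ref{cota_pre_delta_1}): you invoke an upper bound on $\delta$ that is not among the stated hypotheses, and the paper makes the same jump without comment; strictly speaking the natural outcome of the argument is $C_3^n t_n\delta^2$, and the stated $C_3^n t_n\delta$ requires either $\delta\le C$ or should be read as a (harmless) slip in the paper.
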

\begin{proof}
We apply~(\ref{eq:stab_evol_2}) to~(\ref{eq:er1_orig})-(\ref{eq:er3_orig})
so that we get
$$\delta\left\|\nabla r_h^n\right\|_0^2 \le c_0\left(
\nu\|\nabla \tilde\be_h^{0}\|_0^2+\delta\|\nabla r_h^{0}\|_0^2
+ \Delta t\sum_{j=0}^{n-1}(\|P_{V_h}{\boldsymbol\tau}_h^j\|_0^2+\|\nabla (z_h^{j+1}-
z_h^j)\|_0^2\right).
$$
In view of~(\ref{eq:er13}) and~(\ref{eq:er8-1})-(\ref{eq:er8}) we have
\begin{align}
\label{eq:press_1}
\delta\left\|\nabla r_h^n\right\|_0^2 \le&C\left(
\nu\|\nabla \tilde\be_h^{0}\|_0^2+\delta\|\nabla r_h^{0}\|_0^2
+\Delta t^2\int_0^{t_n} \Bigl(\|({\mbf s}_h)_{tt}\|_0^2
+\|\nabla (z_h)_t\|_0^2\Bigr)\,dt\right.
\nonumber\\
&{}\left.
+{t_n}\max_{t_1\le t\le t_n}
\|\bv_t(t)-({\mbf s_h})_t(t)\|_0^2\right),
\end{align}
which in view of~(\ref{eq:error_steady_simplified_linear}) can be written
as
\begin{equation}\label{eq:pressmenos2}
\begin{split}
\delta\left\|\nabla r_h^n\right\|_0^2 \le&C\left(
{\nu}\|\nabla \tilde\be_h^{0}\|_0^2+\delta\|\nabla r_h^{0}\|_0^2
+{\Delta t^2}\int_0^{t_n} \Bigl(\|({\mbf s}_h)_{tt}\|_0^2
+\|\nabla (z_h)_t\|_0^2\Bigr)\,dt\right.
\\
&{}\left.+t_n\frac{h^4}{\nu^2}\Bigl(\nu^2\max_{t_1\le t\le t_n}\|\bv_t(t)\|_{2}^2+\max_{t_1\le t\le t_n}\|q_t(t)\|_{1}^2
\Bigr)+t_n\delta \max_{t_1\le t\le t_n}\|q_t(t)\|_{1}^2\right).
\end{split}
\end{equation}
To conclude we apply the triangle inequality together with ~(\ref{eq:error_steady_simplified_linear}).
\end{proof}
\begin{remark}
The norm $\|\nabla (z_h)_t\|_0$ in the constant $C_4^n$ in (\ref{laC3}) can be bounded as follows. Using inverse inequality (\ref{inv}) and
(\ref{eq:estabi_eli}) we get
\begin{eqnarray*}
\|\nabla (z_h)_t\|_0\le \|\nabla( (z_h)_t-J_h q_t)\|_0+\|\nabla q_t\|_0\le c_{\rm inv}h^{-1}\|(z_h)_t-J_h q_t\|_0+C\|q_t\|_1.
\end{eqnarray*}
Applying now (\ref{eq:error_steady_simplified_linear}) and (\ref{eq:cota_ellip}) we finally bound $\|\nabla (z_h)_t\|_0$ in terms of $\|\bv_t\|_2$ and $\|q_t\|_1$.
\end{remark}
\begin{remark}
As before, the bound (\ref{cota_pre_delta_1}) applies to the standard Euler non-incremental scheme with $\delta=\Delta t$ assuming in that case $ h^2/(\nu\rho_1^2)\le \Delta t$, i.e. condition (\ref{eq:cond_pidomask}) holds.
We can deduce from (\ref{cota_pre_delta_1}) that the errors in the pressure are bounded in terms of $\delta \|\nabla r_h^0\|_0^2$.
Let us observe that using (\ref{eq:eu_non_tilde}) to get $\tilde \bv_h^1$, apart from the standard initial condition for the velocity $\tilde \bv_h^0$ one would need
an initial pressure $q_h^0$. If one takes for example $q_h^0=0$, then one gets $\|\nabla r_h^0\|_0^2=\|\nabla z_h^0\|_0$, the last norm being $O(1)$ as can be proved arguing as in Remark~3.   Then $\delta \|\nabla r_h^0\|_0^2=O(\delta)$ which is of the same order as the last term in (\ref{cota_pre_delta_1}). As a consequence, the choice $q_h^0=0$ in (\ref{eq:eu_non_tilde}) does not spoil the rate of convergence of the pressure.
\end{remark}

Next lemma gets an improvement of the error bound (\ref{eq:pressmenos2}) that will allow us to understand the effect of the initial condition chosen on the error in the approximate pressure.
\begin{lema} Let $r_h^n=q_h^n-z_h^n$ the error defined in (\ref{eq:defer}). Under the assumptions of Theorem~\ref{Th1} the following bound holds
\begin{equation}\label{eq:press_2tmenos1}
\|\nabla r_h^n\|_0^2 \le C\Bigl (\frac{\|\tilde \be_h^0\|_0^2}{\delta t_n}+\frac{\Delta t^2}{t_n^2}\|\nabla r_h^0\|_0^2+
\Delta t\bigl (C_1^n+t_nC_4^n\bigr)+\delta C_3^n\bigl((\nu\lambda)^{-1} +t_n \bigr)+\nu h^2(C_2^n+t_nC_5^n)\Bigr ),
\end{equation}
where $C_1^n$, $C_2^n$, $C_3^n$, $C_4^n$ and $C_5^n$ are the constants in (\ref{laC1}), (\ref{laC2}), (\ref{laC3}), (\ref{laC4}) and (\ref{laC5}) respectively.
\end{lema}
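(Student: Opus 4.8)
The plan is to apply the weighted stability estimate~(\ref{eq:stab_evol_1t}) to the error equations~(\ref{eq:er1_orig})--(\ref{eq:er3_orig}), rather than the unweighted~(\ref{eq:stab_evol_1}) or the $H^1$-type estimate~(\ref{eq:stab_evol_2}) used in the previous two lemmas. The point is that~(\ref{eq:stab_evol_1t}) carries the weights $t_{j+1}$, which compensate for the $1/t_n$-type factors appearing on the left of~(\ref{eq:press_2tmenos1}); this is the standard device for proving that an error which is only $O(1)$-controlled near $t=0$ becomes fully accurate away from $t=0$. Concretely, I take $\bw_h^n=\tilde\be_h^n$, $y_h^n=r_h^n$, $\bb_h^n=P_{V_h}\bs\tau_h^n$ and $d_h^n=z_h^{n+1}-z_h^n$, exactly as in the proof of Theorem~\ref{Th1}, and then estimate the right-hand side of~(\ref{eq:stab_evol_1t}).

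The key steps, in order, are as follows. First, I need a bound for the term $\Delta t\sum_{j=n_0}^n\|\tilde\be_h^j\|_0^2$ that appears on the right of~(\ref{eq:stab_evol_1t}); this is controlled by $t_n$ times the $L^2$-in-time bound already obtained inside the proof of Theorem~\ref{Th1} (before passing to~(\ref{cota_th1_l2})), i.e. it contributes $\|\tilde\be_h^0\|_0^2$ plus the $\Delta t^2$, $h^4$ and $\delta^2$ terms with the $C_1^n,C_2^n,C_3^n$ constants. Second, the weighted forcing terms: for $\Delta t\sum t_{j+1}^2\|\bb_h^j\|_0^2$ I use the splitting of $\bs\tau_h^j$ from~(\ref{eq:tau_h}) together with~(\ref{eq:trunca}) and the steady-state bounds~(\ref{eq:error_steady_simplified_linear}) applied to $\bv_t-(\mbf s_h)_t$, which yields the $t_n(C_1^n+t_nC_4^n)$ and $\nu h^2(C_2^n+t_nC_5^n)$ and $\delta C_3^n(\cdots)$ contributions; for $\Delta t\sum t_{j+1}\delta\|\nabla d_h^j\|_0^2$ I reuse~(\ref{eq:preer13})--(\ref{eq:er13}) with an extra $t_n$ weight. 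Third, the term $t_{n_0}\|\bw_h^{n_0}\|_0^2$: I take $n_0=0$ so this is just $0\cdot\|\tilde\be_h^0\|_0^2=0$, which is why no $\|\tilde\be_h^0\|_0^2/\delta$-free term survives from there. Fourth, I divide the resulting bound for $t_n\|\nabla r_h^n\|_0^2$ by $t_n$; the $\|\tilde\be_h^0\|_0^2$ picked up in step one, divided by $t_n$ and weighted against $\delta$ via the fact that the left-hand side of~(\ref{eq:stab_evol_1t}) controls $t_n\delta\|\nabla y_h^n\|_0^2$, produces the $\|\tilde\be_h^0\|_0^2/(\delta t_n)$ term. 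Finally, I still have to explain the $\frac{\Delta t^2}{t_n^2}\|\nabla r_h^0\|_0^2$ term: this comes from keeping the $n_0=0$ contribution of the $\delta\|\nabla y_h^{n_0}\|_0^2$-type quantity — one applies~(\ref{eq:stab_evol_1t}) starting the sum at $j=0$ but must account for the first step $j=0$ separately where the weight $t_1=\Delta t$ appears, giving $t_1\delta\|\nabla r_h^0\|_0^2=\Delta t\,\delta\|\nabla r_h^0\|_0^2$; dividing by $t_n$ and by $\delta$ and noting $\Delta t/t_n\le 1$ one arrives at a term of the stated form, or more carefully one isolates the $j=0$ summand in the forcing using $\|\nabla(z_h^1-z_h^0)\|_0$ estimated crudely by $\|\nabla r_h^0\|_0$ plus higher-order pieces.

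The main obstacle I anticipate is bookkeeping the $j=0$ (near-$t=0$) terms correctly so that the $\|\nabla r_h^0\|_0$ dependence comes out with the advertised factor $\Delta t^2/t_n^2$ rather than something larger. The estimate~(\ref{eq:stab_evol_1t}) as stated requires $t_{n_0}\|\bw_h^{n_0}\|_0^2$ on the right but nothing involving $\nabla y_h^{n_0}$, so the $\|\nabla r_h^0\|_0$ term cannot enter through the stability lemma's hypothesis directly; it must enter through the forcing $d_h^0=z_h^1-z_h^0$, and one has to be careful that $\|\nabla(z_h^1-z_h^0)\|_0$ is genuinely $O(\Delta t)\|\nabla (z_h)_t\|_{L^\infty}$ — so the $\|\nabla r_h^0\|_0$ in~(\ref{eq:press_2tmenos1}) should really be understood as arising from writing $q_h^0 = z_h^0 + r_h^0$ and tracking how the chosen initial pressure propagates through the very first time step. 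I would double-check whether an alternative cleaner route is to apply~(\ref{eq:stab_evol_1t}) to the \emph{shifted} sequence and separately estimate the single first step by an energy argument; in any case, getting the powers of $\Delta t/t_n$ sharp is the delicate point, while everything else is a routine repackaging of the estimates already assembled for Theorem~\ref{Th1} and the preceding lemma.
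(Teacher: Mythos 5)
Your plan has the right spirit (use vanishing time weights at $t=0$ so that the initial data enter only weakly), but the concrete route you choose does not work: you apply the weighted $L^2$ estimate~(\ref{eq:stab_evol_1t}) to the unweighted sequences $\bw_h^n=\tilde\be_h^n$, $y_h^n=r_h^n$. The left-hand side of~(\ref{eq:stab_evol_1t}) controls $t_n\|\bw_h^n\|_0^2$ pointwise in time, but for the pressure it only controls the \emph{time sum} $\Delta t\sum_j t_{j+1}\delta\|\nabla y_h^{j+1}\|_0^2$. The lemma requires a pointwise bound on $\|\nabla r_h^n\|_0^2$ at the final time; extracting the single summand $j=n-1$ from that sum loses a factor $\Delta t$, so your step four ("the left-hand side of~(\ref{eq:stab_evol_1t}) controls $t_n\delta\|\nabla y_h^n\|_0^2$") is not available. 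The paper instead applies the $H^1$-type estimate~(\ref{eq:stab_evol_2}) — whose left-hand side does contain $\delta\|\nabla y_h^n\|_0^2$ at the final time — to the \emph{time-weighted} sequences $\bw_h^n=t_n\tilde\be_h^n$, $y_h^n=t_n r_h^n$. This forces modified right-hand sides $\bb_h^n=t_{n+1}P_{V_h}\bs\tau_h^n+\tilde\be_h^n$ and $d_h^n=t_{n+1}(z_h^{n+1}-z_h^n)-\Delta t\,r_h^n$, and the two commutator terms $\tilde\be_h^n$ and $-\Delta t\,r_h^n$ are exactly what generate, after dividing by $\delta t_n^2$, the terms $\|\tilde\be_h^0\|_0^2/(\delta t_n)$ and $(\Delta t^2/t_n^2)\|\nabla r_h^0\|_0^2$. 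With the weighted sequences the initial terms $\nu\|\nabla\bw_h^0\|_0^2+\delta\|\nabla y_h^0\|_0^2$ in~(\ref{eq:stab_evol_2}) vanish identically because $t_0=0$, which is what removes the unweighted dependence on the initial data.

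Your explanation of the $\frac{\Delta t^2}{t_n^2}\|\nabla r_h^0\|_0^2$ term is also not salvageable as stated: you propose to extract it from $\|\nabla(z_h^1-z_h^0)\|_0$, but $z_h^j$ is the stabilized Stokes projection of the exact solution and its increment is genuinely $O(\Delta t)\max_t\|\nabla(z_h)_t\|_0$, entirely independent of the chosen discrete initial pressure $q_h^0$. Since $r_h^1$ manifestly depends on $r_h^0=q_h^0-z_h^0$ through the term $(\nabla r_h^0,\bs\chi_h)$ in~(\ref{eq:er1_orig}), a bound with no $r_h^0$ dependence cannot be correct, and in your setup $r_h^0$ never enters the right-hand side of~(\ref{eq:stab_evol_1t}) at all. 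The dependence must come in through the modified forcing $d_h^n$ of the weighted system (the $-\Delta t\,r_h^n$ piece at $j=0$, which contributes $\Delta t\cdot\Delta t^2\|\nabla r_h^0\|_0^2\le\delta\Delta t^2\|\nabla r_h^0\|_0^2$ after using $\Delta t\le\delta$). The remaining ingredients you list — bounding $\Delta t\sum_j\|\tilde\be_h^j\|_0^2$ by $t_n$ times the estimate~(\ref{eq:er9}), and treating the weighted truncation terms with~(\ref{eq:preer13}), (\ref{eq:er8-1})--(\ref{eq:er8}) and~(\ref{eq:error_steady_simplified_linear}) — do match the paper's proof once the correct weighted system and stability estimate are in place.
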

\begin{proof}
Multiply~(\ref{eq:er1_orig}) and~(\ref{eq:er3_orig}) by $t_{n+1}$, and add $\pm (t_n\tilde\be_n/\Delta t,{\bs \chi}_h)$ and~$\pm t_n(\nabla r_h^n,{\bs \chi}_h)$ to~(\ref{eq:er1_orig}), so that
for
$$
\bw_h^n=t_n\tilde\be_h^n,\qquad y_h^n=t_nr_h^n,
\qquad {\bb_h^n}=t_{n+1}P_{V_h}{\boldsymbol\tau}_h^n + \tilde \be_h^n,
$$
and
$$
d_n=t_{n+1}(z_h^{n+1}-z_h^n) - \Delta t r_h^n,
$$
we get~(\ref{eq:er1})-(\ref{eq:er3}). Applying~(\ref{eq:stab_evol_2})
we have
\begin{equation}\label{eq:press_1t}
\begin{split}
\delta t_n^2\left\|\nabla r_h^n\right\|_0^2 \le &
 c_0\Delta t\sum_{j=0}^{n-1}t_{j+1}^2(\|{\boldsymbol\tau}_h^j\|_0^2+\|\nabla (z_h^{j+1}-
z_h^j)\|_0^2)\\
&+c_0\Delta t\sum_{j=0}^{n-1}(\|\tilde \be_h^j\|_0^2 +{\Delta t}^2\|\nabla r_h^j\|_0^2).
\end{split}
\end{equation}
For the second sum on the right hand side above using (\ref{eq:cond_delta2}) and $\Delta t\le t_n$ for $n\ge1$ we get
\begin{equation*}
\Delta t\sum_{j=0}^{n-1}(\|\tilde \be_h^j\|_0^2 +\Delta t^2\|\nabla r_h^j\|_0^2)
\le t_n \max_{0\le j\le n-1}\|\tilde\be_h^j\|_0^2 + \delta \Delta t^2\|\nabla r_h^0\|_0^2+
t_n\sum_{j=1}^{n-1}\delta\Delta t\|\nabla r_h^j\|_0^2
\end{equation*}
and then apply~(\ref{eq:er9}) to reach
\begin{equation}\label{eq:extra1}
\begin{split}
\Delta t\sum_{j=0}^{n-1}(\|\tilde \be_h^j\|_0^2 &+\Delta t^2\|\nabla r_h^j\|_0^2)\\
&\le C t_n \|\tilde \be_h^0\|_0^2+\delta \Delta t^2\|\nabla r_h^0\|_0^2
+C\bigl(C_1^n t_n^2\Delta t^2+C_2^n t_n^2 h^4+\frac{C_3^n}{\nu\lambda}t_n^2\delta^2\bigr),
\end{split}
\end{equation}
where $C_1^n$, $C_2^n$ and $C_3^n$ are the constants in (\ref{laC1}), (\ref{laC2}) and (\ref{laC3}) respectively.

For the first sum on the
right hand side of~(\ref{eq:press_1t}) we write $t_{j+1}^2 \le t_n^2$ and
apply~(\ref{eq:preer13}) and (\ref{eq:er8-1}), (\ref{eq:er8}). Then, we get
\begin{equation*}
\begin{split}
\Delta t\sum_{j=0}^{n-1}t_{j+1}^2(\|{\boldsymbol\tau}_h^j\|_0^2&+\|\nabla (z_h^{j+1}-
z_h^j)\|_0^2\\
&\le Ct_n^2\Delta t^2\left(\int_0^{t_n}\left(\|({\mbf s}_h)_{tt}(t)\|_0^2+\|\nabla(z_h)_t(t)\|_0^2\right)dt\right)\\
&{}\quad+C t_n^3\max_{t_1\le t\le t_n}\|\bv_t(t)-({\mbf s}_h)_t(t)\|_0^2,
\end{split}
\end{equation*}
and applying (\ref{eq:error_steady_simplified_linear})
\begin{equation}\label{eq:extra2}
\Delta t\sum_{j=0}^{n-1}t_{j+1}^2\bigl (\|{\boldsymbol\tau}_h^j\|_0^2+\|\nabla (z_h^{j+1}-
z_h^j)\|_0^2\bigr )\le C\left(C_4^nt_n^3\Delta t^2+C_5^n t_n^3 h^4+C_3^n t_n^3 \delta^2\right),
\end{equation}
where $C_3^n$, $C_4^n$ and $C_5^n$ are the constants in (\ref{laC3}), (\ref{laC4}) and (\ref{laC5}).
Inserting (\ref{eq:extra1}) and (\ref{eq:extra2}) into (\ref{eq:press_1t}) we obtain
\begin{eqnarray*}
\delta\|\nabla r_h^n\|_0^2 &\le& C\left(\frac{\|\tilde \be_h^0\|_0^2}{t_n}+\frac{\delta\Delta t^2}{t_n^2}\|\nabla r_h^0\|_0^2+
\Delta t^2\left(C_1^n +t_n C_4^n\right))+h^4(C_2^n+t_nC_5^n)\right)\nonumber\\
&&\quad+C\delta^2C_3^n\bigl((\nu\lambda)^{-1}+t_n\bigr).
\end{eqnarray*}
Dividing by $\delta$ and using conditions (\ref{eq:cond_delta}) and (\ref{eq:cond_delta2}) we reach (\ref{eq:press_2tmenos1}).
\end{proof}
\begin{remark}\label{re:remark6}
Let us assume we choose the initial condition for the velocity such that the error $\|\tilde\be_h^{0}\|_0=O(h^2)$ and
$q_h^0=0$. Then  $\|\nabla r_h^0\|_0=O(1)$ (see Remark 7) and, as a consequence, the second term in (\ref{eq:press_2tmenos1}) for $n=1$ is $O(1)$ and the first one is $O(h^2/\Delta t)$ and then is also $O(1)$ in case (\ref{eq:cond_pidomask}) is satisfied or in can be worse than $O(1)$ if we consider the modified Euler non-incremental scheme and we take $\Delta t$ tending to 0 for a fixed $h$.

 However, in the case $(\tilde \bv_h^0,r_h^0)=({\mbf s}_h^0,z_h^0)$, i.e., taking as initial approximation to the velocity and pressure the stabilized Stokes approximation of the solution $(\bv,p)$ of (\ref{eq:evo_stokes}) at time $t=0$, as suggested in \cite{John_Novo_PSPG},  the errors  $\|\nabla r_h^n\|_0$ are $O(h)$ from the first step.  This result is in agreement with both theoretical and  numerical results shown in \cite{John_Novo_PSPG} for the PSPG method applied
to the evolutionary Stokes equations and supports the  analogy between the Euler non incremental projection scheme and the PSPG method previously found in the literature \cite{Guermond_Quar_IJNMF}, \cite{ranacher}. We refer the reader  to \cite{John_Novo_PSPG} for details about the practical computation of the initial stabilized Stokes approximation using only the given data ${\mbf g}$ and $\bv^0$.
\end{remark}
\begin{lema} Under the assumptions of Lemma~4 and assuming $(\tilde \bv_h^0,q_h^0)=({\mbf s}_h^0,z_h^0)$, the following bound holds for the error $q_h^n-q(t_n)$
\begin{equation}\label{eq:press_L2tn}
\begin{split}
\|q_h^n-q(t_n)\|_0^2 &\le C\lambda^{-1}\left(\Delta t\bigl (C_1^n+t_nC_4^n\bigr)+\delta C_3^n\bigl((\nu\lambda)^{-1} +t_n \bigr)+\nu h^2(C_2^n+t_nC_5^n)\right)\\
&\quad +C h^2(\nu^2\|\bv(t_n)\|_2^2+\|q(t_n)\|_1^2)+C\nu\delta\|q(t_n)\|_1^2,
\end{split}
\end{equation}
where $C_1^n$, $C_2^n$, $C_3^n$, $C_4^n$ and $C_5^n$ are the constants in (\ref{laC1}), (\ref{laC2}), (\ref{laC3}), (\ref{laC4}) and (\ref{laC5}) respectively.
\end{lema}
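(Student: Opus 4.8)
The plan is to compare the computed pressure with the stabilized Stokes intermediate $({\mbf s}_h^n,z_h^n)$ used throughout this section and to split, using the notation of~(\ref{eq:defer}),
$$
q_h^n-q(t_n)=r_h^n+\bigl(z_h^n-q(t_n)\bigr),
$$
estimating the two summands separately. The second one is a purely steady error: $(\bv(t_n),q(t_n))$ solves the Stokes problem~(\ref{eq:stokes}) with data $\hat{\mbf g}^n={\mbf g}^n-\bv_t^n$ and $({\mbf s}_h^n,z_h^n)$ is its stabilized finite element approximation~(\ref{eq:pro_stokes_evo}), so the $L^2$ pressure estimate in~(\ref{eq:error_steady_simplified_linear}) (with $k=l=1$ and $l'=0$), squared, gives
$$
\|z_h^n-q(t_n)\|_0^2\le C h^2\bigl(\nu^2\|\bv(t_n)\|_2^2+\|q(t_n)\|_1^2\bigr)+C\nu\delta\|q(t_n)\|_1^2 ,
$$
which is exactly the last line of~(\ref{eq:press_L2tn}).

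For the first summand the point is to upgrade the gradient bound for $r_h^n$ already in~(\ref{eq:press_2tmenos1}) to an $L^2$ bound. Since $r_h^n=q_h^n-z_h^n$ is the difference of two elements of $Q_h\subset L^2_0(\Omega)$ it has zero mean value, so a Poincar\'e inequality for mean-zero functions gives $\|r_h^n\|_0^2\le C\lambda^{-1}\|\nabla r_h^n\|_0^2$, its constant being the natural length scale $\lambda^{-1/2}$ up to a dimensionless factor absorbed into~$C$. Now one invokes~(\ref{eq:press_2tmenos1}): the hypothesis $(\tilde\bv_h^0,q_h^0)=({\mbf s}_h^0,z_h^0)$ forces $\tilde\be_h^0=0$ and $r_h^0=0$ through~(\ref{eq:defer}), so the first two terms on the right of~(\ref{eq:press_2tmenos1}) vanish and what survives is precisely $\lambda^{-1}$ times the first group of terms of~(\ref{eq:press_L2tn}). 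Combining with the previous paragraph through the triangle inequality yields~(\ref{eq:press_L2tn}), with $C_1^n,\dots,C_5^n$ as in~(\ref{laC1})--(\ref{laC5}).

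The one nonroutine ingredient is this passage $\|r_h^n\|_0\lesssim\lambda^{-1/2}\|\nabla r_h^n\|_0$: it has to be used with a constant of the correct scale for all the powers of $h$, $\nu$, $\delta$ and $\lambda$ to match, and it quietly absorbs the comparison $h^2\lesssim\lambda^{-1}$ between the mesh size and the domain size. If one prefers not to invoke the mean-zero Poincar\'e inequality, an equivalent route is to use Lemma~\ref{lema_presion} and bound $\sup_{{\bs\chi}_h\in V_h}(r_h^n,\nabla\cdot{\bs\chi}_h)/\|{\bs\chi}_h\|_1$ through the momentum error equation~(\ref{eq:er1_orig}): after integration by parts this supremum is dominated by $\|(\tilde\be_h^{n+1}-\tilde\be_h^n)/\Delta t\|_{-1}$, $\nu\|\nabla\tilde\be_h^{n+1}\|_0$, $\|{\boldsymbol\tau}_h^n\|_{-1}$ and $\|z_h^{n+1}-z_h^n\|_0$, the negative norms being traded for $L^2$ norms via~(\ref{eq:cota_menos1}); the pointwise-in-$n$ bounds needed then follow from applying~(\ref{eq:stab_evol_2}) to the time-weighted system $\bw_h^n=t_n\tilde\be_h^n$, $y_h^n=t_n r_h^n$, exactly as in the proof of~(\ref{eq:press_2tmenos1}). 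This variant is longer but uses no estimate beyond those already established; in either case the remaining bookkeeping — collecting powers of $\Delta t$, $h$, $\delta$ and using~(\ref{eq:cond_delta})--(\ref{eq:cond_delta2}) — is straightforward.
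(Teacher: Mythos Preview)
Your proof is correct and follows essentially the same approach as the paper: split $q_h^n-q(t_n)=r_h^n+(z_h^n-q(t_n))$, bound $\|r_h^n\|_0$ by Poincar\'e (using that $r_h^n$ has zero mean) together with~(\ref{eq:press_2tmenos1}) and the vanishing of $\tilde\be_h^0,r_h^0$, and bound $\|z_h^n-q(t_n)\|_0$ by~(\ref{eq:error_steady_simplified_linear}). The alternative route you sketch via Lemma~\ref{lema_presion} is not needed here, but it is a valid longer variant.
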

\begin{proof} Applying Poincar\'e inequality and (\ref{eq:press_2tmenos1}) we get
$$
\|r_h^n\|_0^2\le C\lambda^{-1}\|\nabla r_h^n\|_0^2\le C\lambda^{-1}\left(\Delta t\bigl (C_1^n+t_nC_4^n\bigr)+\delta C_3^n\bigl((\nu\lambda)^{-1} +t_n \bigr)+\nu h^2(C_2^n+t_nC_5^n)\right).
$$
Now, (\ref{eq:press_L2tn}) follows applying triangle inequality together with (\ref{eq:error_steady_simplified_linear}).
\end{proof}

To conclude this section we get an error bound for the pressure valid for any initial condition.
\begin{theorem}\label{Th2} Under the assumptions of Theorem~\ref{Th1}
 the following bound holds
\begin{equation}\label{eq:cotapresionl2final}
\begin{split}
\sum_{j=1}^n \Delta t \|q_h^j-q(t_j)\|_0^2&
\le C (t_n\nu+\lambda^{-1})(\nu\|\nabla \tilde \be_h^0\|_0^2+ \delta\|\nabla r_h^0\|_0^2)
+C\nu\|\tilde\be_h^0\|_0^2+C t_n^2\nu C_3^n \delta\\
&\quad +C t_n h^2\max_{t_1\le t\le t_n}(\nu\|\bv(t)\|_2^2+\|q(t)\|_1^2)+C t_n \nu\delta \max_{t_1\le t\le t_n}\|q(t)\|_1^2\\
&\quad+Ct_{n+1}\Delta t^2\left(\nu t_n C_5^n+\nu C_1^{n+1}+C_6^{n+1}+\lambda^{-1}C_7^{n+1}\right)\\
&\quad+Ct_{n+1}h^4\left(\nu t_n C_4^n+\nu C_2^{n+1}+\lambda^{-1}C_5^{n+1}\right)+Ct_{n+1}\lambda^{-1}C_3^{n+1}\delta^2.
\end{split}
\end{equation}
 where
 \begin{eqnarray}\label{laC6}
 C_6^n&=&\max_{t_1\le t\le t_n}\|(z_h)_t(t)\|_0^2+\lambda^{-1}\max_{t_1\le t\le t_n}\|\nabla (z_h)_t\|_0^2,\\
 C_7^n&=&\max_{t_1\le t\le t_n}\|({\mbf s}_h)_{tt}(t)\|_0^2,\label{laC7}
 \end{eqnarray}
 and $C_1^n$, $C_2^n$, $C_3^n$, $C_4^n$, $C_5^n$, $C_6^n$ and $C_7^n$ are the constants in (\ref{laC1}), (\ref{laC2}), (\ref{laC3}), (\ref{laC4}), (\ref{laC5}), (\ref{laC6}) and (\ref{laC7}).
\end{theorem}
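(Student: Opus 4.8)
The plan is to bound the discrete-in-time $\ell^2(L^2)$ norm of the pressure error by splitting $q_h^j-q(t_j)=r_h^j+(z_h^j-z(t_j))$, where $r_h^j=q_h^j-z_h^j$ is controlled by the preceding lemmas and $z_h^j-z(t_j)$ is the steady stabilized Stokes error controlled by~(\ref{eq:error_steady_simplified_linear}). Since the last term contributes $C t_n h^2\max(\nu\|\bv(t)\|_2^2+\|q(t)\|_1^2)+Ct_n\nu\delta\max\|q(t)\|_1^2$ after multiplying by $\Delta t$ and summing, the real work is to estimate $\sum_{j=1}^n\Delta t\|r_h^j\|_0^2$. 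I would first apply the Poincar\'e inequality~(\ref{eq:poincare}) to reduce this to $\lambda^{-1}\sum_j\Delta t\|\nabla r_h^j\|_0^2$, but because Lemma~4 only gives $\delta\|\nabla r_h^n\|_0^2$ pointwise with a $1/t_n$ factor on $\|\tilde\be_h^0\|_0^2$, a direct sum would produce a logarithmic factor. Instead I would run the stability lemma a second time on a weighted problem.

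Concretely, I would revisit the construction used in the proof of Lemma~5: multiply the error equations~(\ref{eq:er1_orig})--(\ref{eq:er3_orig}) by $t_{n+1}$ and absorb the resulting zeroth-order shift, so that with $\bw_h^n=t_n\tilde\be_h^n$, $y_h^n=t_n r_h^n$, $\bb_h^n=t_{n+1}P_{V_h}{\boldsymbol\tau}_h^n+\tilde\be_h^n$, $d_n=t_{n+1}(z_h^{n+1}-z_h^n)-\Delta t\, r_h^n$, relations~(\ref{eq:er1})--(\ref{eq:er3}) hold. Now apply the \emph{summed} bound~(\ref{eq:stab_evol_1}) (rather than~(\ref{eq:stab_evol_2})), which yields
$$
\Delta t\sum_{j=1}^{n}\delta\, t_j^2\|\nabla r_h^j\|_0^2\le c_0\Bigl(t_0^2\|\tilde\be_h^0\|_0^2+\Delta t\sum_{j=0}^{n-1}\bigl(\nu^{-1}\|\bb_h^j\|_{-1}^2+\delta\|\nabla d_j\|_0^2\bigr)\Bigr).
$$
Since $t_0=0$ the initial-velocity term drops out entirely, and the $\Delta t r_h^n$ piece of $d_j$ reintroduces a controllable $\delta\Delta t^2\sum\|\nabla r_h^j\|_0^2$ term that is absorbed into the left side for $\Delta t$ small (it is $O(\Delta t/t_n)\cdot$ the left side). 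The residual terms $\nu^{-1}\|\bb_h^j\|_{-1}^2$ are split as before via~(\ref{eq:cota_menos1}), (\ref{eq:trunca}) and the triangle inequality $\bv_t-({\mbf s}_h)_t$ using~(\ref{eq:error_steady_simplified_linear}); the extra $\tilde\be_h^j$ summand in $\bb_h^j$ is handled by~(\ref{eq:er9}). One then divides by $\delta$ (using~(\ref{eq:cond_delta})) and by the weight: estimating $\sum_j\Delta t\|\nabla r_h^j\|_0^2$ from $\sum_j\Delta t\, t_j^2\|\nabla r_h^j\|_0^2$ costs a factor essentially $t_n^{-2}$ only at the earliest step and a factor $1$ thereafter, which is exactly how the mixed $t_{n+1}$, $t_{n+1}^2$ weights and the $\lambda^{-1}$, $\nu$ prefactors in~(\ref{eq:cotapresionl2final}) arise. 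The $C_6^n$ and $C_7^n$ constants appear here: $C_7^n$ from $\max\|({\mbf s}_h)_{tt}\|_0^2$ after pulling the time integral of $\|({\mbf s}_h)_{tt}\|_0^2$ out of the weighted sum, and $C_6^n$ from the $\|(z_h)_t\|_0^2$ contribution of the $\Delta t r_h^n$ correction in $d_j$ combined with a Poincar\'e step.

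Finally, I would combine: $\sum_j\Delta t\|r_h^j\|_0^2\le C\lambda^{-1}\sum_j\Delta t\|\nabla r_h^j\|_0^2$ gives the $r_h$ contribution with the stated $\nu\|\nabla\tilde\be_h^0\|_0^2+\delta\|\nabla r_h^0\|_0^2$, $\nu\|\tilde\be_h^0\|_0^2$, $C_3^n\delta$, and $\Delta t^2$/$h^4$ terms, while the triangle inequality with~(\ref{eq:error_steady_simplified_linear}) supplies the $h^2$ and $\nu\delta$ terms from $z_h^j-z(t_j)$; the prefactors $(t_n\nu+\lambda^{-1})$ and $t_{n+1}$ track which bound (pointwise Lemma~4 versus the weighted sum above, each multiplied by $t_n$ or $\lambda^{-1}$) was used for each block. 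The main obstacle I anticipate is bookkeeping: keeping the powers of $t_n$, $t_{n+1}$, $\nu$, $\lambda^{-1}$, $\delta$ straight while avoiding the spurious $\log$ factor — this is why the weighted-sum trick with vanishing initial weight $t_0=0$, rather than a naive summation of the pointwise estimate~(\ref{cota_pre_delta_1}), is essential; everything else is a reassembly of estimates already established in the proofs of Theorem~\ref{Th1} and Lemmas~4--6.
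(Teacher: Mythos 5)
There is a genuine gap in your treatment of $\sum_j\Delta t\|r_h^j\|_0^2$: replacing the paper's use of Lemma~\ref{lema_presion} by the Poincar\'e inequality loses exactly the factor that makes the theorem work. All the a priori control available on $\nabla r_h^j$ carries the weight $\delta$ (the summed bound (\ref{eq:er9}) controls $\Delta t\sum_j\delta\|\nabla r_h^{j+1}\|_0^2$, and the pointwise bounds control $\delta\|\nabla r_h^n\|_0^2$), so after writing $\|r_h^j\|_0^2\le C\lambda^{-1}\|\nabla r_h^j\|_0^2$ you must divide by $\delta$, which turns the $\Delta t^2$, $h^4$ and $\delta^2$ right-hand sides into $\Delta t^2/\delta$, $h^4/\delta$ and $\delta$; by (\ref{eq:cond_delta2}) and (\ref{eq:cond_delta}) this is only $O\bigl(\lambda^{-1}(\Delta t+\nu h^2+\delta)\bigr)$, which neither reproduces the $\Delta t^2$ and $h^4$ terms of (\ref{eq:cotapresionl2final}) nor is dominated by the terms that are there (e.g.\ $\lambda^{-1}\delta$ is not controlled by $t_n^2\nu C_3^n\delta$ or $t_{n+1}\lambda^{-1}C_3^{n+1}\delta^2$). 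Your attempted repair via the $t_j$-weighted system does not close this: from $\Delta t\sum_j\delta\, t_j^2\|\nabla r_h^j\|_0^2\le R$ the only way back to the unweighted sum is $\Delta t\sum_j\|\nabla r_h^j\|_0^2\le\bigl(\max_{1\le j\le n} t_j^{-2}\bigr)\,\Delta t\sum_j t_j^2\|\nabla r_h^j\|_0^2=\Delta t^{-2}R/\delta$, so the loss is $\Delta t^{-2}$, not ``$t_n^{-2}$ only at the earliest step'': the weighted sum carries essentially no information about the early steps where $t_j\sim\Delta t$, and those steps contribute to the unweighted sum on the same footing as the late ones.

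The missing idea is the one the paper actually uses: apply Lemma~\ref{lema_presion}, $\|r_h^j\|_0\le Ch\|\nabla r_h^j\|_0+C\sup_{{\bs\chi}_h\in V_h}(r_h^j,\nabla\cdot{\bs\chi}_h)/\|{\bs\chi}_h\|_1$. The first term carries an $h$, and since $h^2\le\nu\rho_1^2\delta$ by (\ref{eq:cond_delta}) it is controlled by the $\delta$-weighted gradient quantity you already have (the paper uses the pointwise bound (\ref{eq:press_2tmenos1_bis}) summed over $j$); no division by $\delta$ ever occurs. The sup term is then read off from the momentum error equation (\ref{eq:er1_orig}): it is bounded by $\|(\tilde\be_h^{j+1}-\tilde\be_h^j)/\Delta t\|_{-1}+\nu\|\nabla\tilde\be_h^{j+1}\|_0+\|P_{V_h}{\boldsymbol\tau}_h^j\|_{-1}+\|z_h^j-z_h^{j+1}\|_0$, and it is the discrete time derivative, estimated through (\ref{eq:cota_menos1}) and the stability bound (\ref{eq:stab_evol_2}), that produces the $\lambda^{-1}(\nu\|\nabla\tilde\be_h^0\|_0^2+\delta\|\nabla r_h^0\|_0^2)$ contribution and, together with $\|z_h^j-z_h^{j+1}\|_0^2\le\Delta t^2\int\|(z_h)_t\|_0^2\,dt$, the constants $C_6^{n+1}$ and $C_7^{n+1}$; your attribution of $C_6$ and $C_7$ to the weighted construction is therefore also off. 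Your outer decomposition $q_h^j-q(t_j)=r_h^j+(z_h^j-q(t_j))$ and the resulting $t_nh^2$ and $t_n\nu\delta$ terms are correct and coincide with the paper, but without Lemma~\ref{lema_presion} and the momentum equation the core estimate cannot be recovered.
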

\begin{proof}
We first observe that from (\ref{eq:pressmenos2}) we get
\begin{equation}\label{eq:press_2tmenos1_bis}
\delta\left\|\nabla r_h^n\right\|_0^2 \le C\left(\nu\|\nabla \tilde \be_h^0\|_0^2
+\delta \|\nabla r_h^0\|_0^2+C_4^n t_n \Delta t^2+C_5^n t_n h^4+C_3^n t_n\delta\right),
\end{equation}
where $C_3^n$, $C_4^n$ and $C_5^n$ are the constants in (\ref{laC3}), (\ref{laC4}) and (\ref{laC5}).
Applying Lemma~\ref{lema_presion} we get
$$
\|r_h^n\|_0\le C\nu^{1/2}\delta^{1/2}\|\nabla r_h^n\|_0+C\sup_{{\bs\chi}_h\in V_h}\frac{(r_h,\nabla \cdot {\bs\chi}_h)}{\|{\bs\chi}_h\|_1}.
$$
From (\ref{eq:er1_orig}) we obtain
\begin{equation*}
\sup_{{\bs\chi}_h\in V_h}\frac{(r_h,\nabla \cdot {\bs\chi}_h)}{\|{\bs\chi}_h\|_1}\le \left\|\frac{\tilde \be_h^{n+1}-\tilde \be_h^n}{\Delta t}\right\|_{-1}+\nu\|\nabla \tilde \be_h^{n+1}\|_0+\|P_{V_h}\hat {\boldsymbol\tau}_h^n\|_{-1}+\|z_h^n-z_h^{n+1}\|_0.
\end{equation*}
Then, we can write
\begin{equation}\label{eq:cotaL2varios2}
\begin{split}
\sum_{j=1}^n\Delta t\|r_h^j\|_0^2&\le C\nu\sum_{j=1}^n\Delta t\delta\|\nabla r_h^j\|_0^2+C\sum_{j=1}^n\Delta t\left\|\frac{\tilde \be_h^{j+1}-\tilde \be_h^j}{\Delta t}\right\|_{-1}^2\\
&\quad+C \nu^2\sum_{j=1}^n\Delta t\|\nabla \tilde \be_h^{j+1}\|_0^2+C \sum_{j=1}^n\Delta t \|P_{V_h}\hat {\boldsymbol\tau}_h^j\|_{-1}^2\\
&\quad+C \sum_{j=1}^n\Delta t\|z_h^j-z_h^{j+1}\|_0^2.
\end{split}
\end{equation}
To bound the first term on the right-hand side of (\ref{eq:cotaL2varios2}) we apply (\ref{eq:press_2tmenos1_bis}) and get
\begin{equation}\label{eq:prel21}
\nu\sum_{j=1}^n\Delta t\delta\|\nabla r_h^j\|_0^2 \le Ct_n\nu\left(\nu\|\nabla \tilde \be_h^0\|_0^2
+\delta \|\nabla r_h^0\|_0^2+t_n\left(C_4^n  \Delta t^2+C_5^n  h^4+C_3^n \delta\right)\right).
\end{equation}
 For the third term we apply (\ref{eq:er9}) with $n+1$ instead of $n$ to obtain
 \begin{equation}\label{eq:prel22}
\nu^2\sum_{j=1}^n\Delta t\|\nabla \tilde \be_h^{j+1}\|_0^2
\le C\nu\left(\|\tilde \be_h^0\|_0^2
+ t_{n+1}C_1^{n+1}\Delta t^2+ t_{n+1}C_2^{n+1}h^4+C_3^{n+1}(\nu\lambda)^{-1} t_{n+1} \delta^2\right).
\end{equation}
Applying (\ref{eq:er8}) with $n$ replaced by $n+1$ again to bound the forth term we get
\begin{equation}\label{eq:prel23}
\sum_{j=1}^n\Delta t \|P_{V_h}\hat {\boldsymbol\tau}_h^j\|_{-1}^2\le C\lambda^{-1}\left( t_{n+1}C_7^{n+1}\Delta t^2+ t_{n+1}C_5^{n+1}h^4+C_3^{n+1} t_{n+1} \delta^2\right).
\end{equation}
For the last term on the right-hand side of (\ref{eq:cotaL2varios2}) we observe that
\begin{equation}\label{eq:prel24}
\sum_{j=1}^n\Delta t\|z_h^j-z_h^{j+1}\|_0^2\le \Delta t^2\int_{t_1}^{t_{n+1}}\|(z_h)_t\|_0^2~dt.
\end{equation}
To conclude we will bound the second term on the right-hand side of (\ref{eq:cotaL2varios2}) applying (\ref{eq:cota_menos1}) and (\ref{eq:stab_evol_2}).
\begin{equation*}
\begin{split}
\sum_{j=1}^n\Delta t\left\|\frac{\tilde \be_h^{j+1}-\tilde \be_h^j}{\Delta t}\right\|_{-1}^2&\le c_{-1}^2\lambda^{-1}\sum_{j=0}^n\Delta t\left\|\frac{\tilde \be_h^{j+1}-\tilde \be_h^j}{\Delta t}\right\|_{0}^2\\
&\le c_{-1}^2c_0\lambda^{-1}\left(\nu\|\nabla \tilde \be_h^0\|_0^2+\delta \|\nabla r_h^0\|_0^2\right)\\
&\quad+c_{-1}^2c_0\lambda^{-1}\Bigl (\Delta t\sum_{j=0}^n\|{\boldsymbol\tau}_h^j\|_0^2+\|\nabla (z_h^j-z_h^{j+1})\|_0^2\Bigr)
\end{split}
\end{equation*}
To bound the last two terms on the right-hand side above we apply (\ref{eq:er8}) for the first one as before and argue as usual for the second so that we reach
\begin{equation}\label{eq:prel25}
\begin{split}
\sum_{j=1}^n\Delta t\left\|\frac{\tilde \be_h^{j+1}-\tilde \be_h^j}{\Delta t}\right\|_{-1}^2&\le C\lambda^{-1}\left(\nu\|\nabla \tilde \be_h^0\|_0^2+\delta \|\nabla r_h^0\|_0^2\right)\\
&\quad+ C\lambda^{-1}\left( t_{n+1}C_7^{n+1}\Delta t^2+ t_{n+1}C_5^{n+1}h^4+C_3^{n+1} t_{n+1} \delta^2\right)\\
&\quad+C\lambda^{-1}\Delta t^2\int_{t_1}^{t_{n+1}}\|\nabla (z_h)_t\|_0^2~dt.
\end{split}
\end{equation}
Inserting (\ref{eq:prel21}), (\ref{eq:prel22}), (\ref{eq:prel23}), (\ref{eq:prel24}) and (\ref{eq:prel25}) into (\ref{eq:cotaL2varios2}) we obtain
\begin{eqnarray}\label{eq:prelast}
\sum_{j=1}^n \Delta t \|r_h^j\|_0^2&\le& C (t_n\nu
+\lambda^{-1})(\nu\|\nabla \tilde \be_h^0\|_0^2+ \delta\|\nabla r_h^0\|_0^2)
+C\nu\|\tilde\be_h^0\|_0^2+C t_n^2\nu C_3^n\delta\\
&&+Ct_{n+1}\Delta t^2\left(\nu t_n C_4^n+\nu C_1^{n+1}+C_6^{n+1}+\lambda^{-1}C_7^{n+1}\right)+Ct_{n+1}\lambda^{-1}C_3^{n+1}\delta^2\nonumber\\
&&+Ct_{n+1}h^4\left(\nu t_n C_5^n+\nu C_2^{n+1}+\lambda^{-1}C_5^{n+1})\right).\nonumber
\end{eqnarray}
 Using the triangle inequality together with (\ref{eq:error_steady_simplified_linear}) we finally reach (\ref{eq:cotapresionl2final}).
 \end{proof}
\begin{remark}
We observe that  Remark 5 can be applied to the error bound (\ref{eq:cotapresionl2final}). On the one hand, the error bound for the pressure holds for the standard Euler non-incremental scheme whenever $\Delta t$ satisfies (\ref{eq:cond_pidomask}). However, for the modified Euler non-incremental scheme only condition (\ref{eq:cond_delta}) is required so that for any $h$ we can allow $\Delta t\rightarrow 0$ without loosing the optimal rate of convergence. On the other hand, any initial approximation for the velocity satisfying $\|\nabla \tilde \be_h^0\|_0=\mathcal{O}(h)$ and any initial pressure
satisfying~$\| \nabla r_h^0\|_0=\mathcal{O}(1)$ (including $q_h^0=0$) will result in an optimal error bound of size
$\mathcal{O}(h+\Delta t+\delta^{1/2})$ for the discrete $L^2$ norm of the pressure error.
\end{remark}
\bigskip
\subsection{Euler incremental scheme}
Let us  denote by $(\bv_h^n,\tilde \bv_h^n,q_h^n)$, $n=1,2,\ldots,$\  $\tilde \bv_h^n\in V_h$, $q_h^n\in Q_h$ and $ \bv_h^n\in V_h+\nabla Q_h$
the approximations to the velocity and pressure at time $t_n=n\Delta t$, $\Delta t=T/N$, $N>0$ obtained with the following modified Euler incremental scheme
\begin{eqnarray}\label{eq:eu_inc0}
&&\left(\frac{\tilde \bv_h^{n+1}-\bv_h^n}{\Delta t},{\bs\chi}_h\right)+\nu(\nabla \tilde \bv_h^{n+1},\nabla {\bs\chi}_h)+(\nabla q_h^n,\bs\chi_h)=({\mbf g}^{n+1},{\bs\chi}_h),\quad \forall {\bs\chi}_h\in V_h\nonumber\\
&&(\nabla \cdot \tilde \bv_h^{n+1},\psi_h)=-\delta(\nabla (q_h^{n+1}-q_h^n),\nabla \psi_h),\quad \forall \psi_h\in Q_h,\\
&&\bv^{n+1}_h=\tilde \bv_h^{n+1}-\delta\nabla (q_h^{n+1}-q_h^n).\nonumber
\end{eqnarray}
Let us observe that for $\delta=\Delta t$ in (\ref{eq:eu_inc0}) we have the classical  Euler incremental scheme~\cite{Guermond_Quar2}. It is well known that this method is not stable if  non inf-sup stable mixed finite elements are employed~\cite{codina}. Following the suggestion in  \cite{Javier-Volker-Julia} (see also \cite{Prohl}) we consider  the following method
\begin{eqnarray}\label{eq:eu_inc1}
&&\left(\frac{\tilde \bv_h^{n+1}-\bv_h^n}{\Delta t},{\bs\chi}_h\right)+\nu(\nabla \tilde \bv_h^{n+1},\nabla {\bs\chi}_h)+(\nabla q_h^n,\bs\chi_h)=({\mbf g}^{n+1},{\bs\chi}_h),\quad \forall {\bs\chi}_h\in V_h\nonumber\\
&&(\nabla \cdot \tilde \bv_h^{n+1},\psi_h)=-\delta(\nabla (q_h^{n+1}-q_h^n),\nabla \psi_h)-\delta_2(\nabla q_h^{n+1},\nabla \psi_h),\quad \forall \psi_h\in Q_h,\\
&&\bv^{n+1}_h=\tilde \bv_h^{n+1}-\delta\nabla (q_h^{n+1}-q_h^n),\nonumber
\end{eqnarray}
where $\delta_2$ is a second stabilization parameter.

In case $\delta=\Delta t$ we can remove $\bv_h^n$ from (\ref{eq:eu_inc1}) to get
\begin{align}\label{eq:eu_inc_tilde}
&\left(\frac{\tilde \bv_h^{n+1}-\tilde \bv_h^n}{\Delta t},{\bs\chi}_h\right)+\nu(\nabla \tilde \bv_h^{n+1},\nabla {\bs\chi}_h)+(\nabla (2q_h^n-q_h^{n-1}),{\bs \chi}_h)=({\mbf g}^{n+1},{\bs\chi}_h),\,\forall {\bs\chi}_h\in V_h\nonumber\\
&(\nabla \cdot \tilde \bv_h^{n+1},\psi_h)=-\delta(\nabla (q_h^{n+1}-q_h^n),\nabla \psi_h)-\delta_2(\nabla q_h^{n+1},\nabla \psi_h),\quad \forall \psi_h\in Q_h.
\end{align}
As in the previous section the method we study is (\ref{eq:eu_inc_tilde}) with $\delta$ not necessarily equal to $\Delta t$. However, since now  the parameter $\delta_2$ is the one equivalent to the stabilization parameter in the PSPG method a reasonable choice for the parameters would be $\delta=\Delta t$ and $\delta_2$ defined as $\delta$ in (\ref{eq:cond_delta}). In this section we do not carry out the error analysis of the method for these values of the stabilization parameters. We only study the errors in the particular  case $\delta_2=\delta$ defined in (\ref{eq:cond_delta}) since in that case the analysis is a direct consequence of the error analysis of the previous section. The analysis of the Euler non-incremental scheme in time with finite elements in space for inf-sup stable elements can be found in \cite{Guermond_Quar2}. To our knowledge there is no error analysis for this method in case of using non inf-sup stable elements. Some stability estimates can be found in \cite{codina}, but for stabilization more related to local projection stabilization than the one we consider here, which is more related  to PSPG stabilization. In \cite{codina} instead of adding $\delta_2(\nabla q_h^{n+1},\nabla \psi_h)$ as in (\ref{eq:eu_inc_tilde}) the term $\delta_2(\nabla q_h^{n+1}-\pi_h,\nabla \psi_h)$ is added where $\pi_h$ is the projection of $\nabla q_h^{n+1}$ into certain finite element space.

Going back to (\ref{eq:eu_inc_tilde}) we first observe that for $\delta_2=\delta$ and
$$
\hat q_h^n=2q_h^n-q_h^{n-1},
$$
it is easy to check that $(\tilde \bv_h^n,\hat q_h^n)$ satisfies (\ref{eq:eu_non_tilde}) and then we can apply the error bounds (\ref{cota_th1_l2}), (\ref{cota_th1_l2}) and (\ref{eq:cotapresionl2final})  to $(\tilde \bv_h^n,\hat q_h^n)$. To conclude this section we prove an error bound for $q_h^n-q(t_n)$.
\begin{theorem}\label{Th3} Let $(\bv,q)$ be the solution of (\ref{eq:evo_stokes}) and let $(\tilde \bv_h^n,q_h^n)$, $n\ge 1$,  be the solution of (\ref{eq:eu_inc_tilde}). Assume $\delta=\delta_2$ satisfies condition (\ref{eq:cond_delta}) and $\Delta t$ satisfies condition (\ref{eq:cond_delta2}).
Then, the following bounds hold
\begin{align}\label{eq:cotapresionl2finalinc}
\sum_{j=1}^n \Delta t \|q_h^j-q(t_j)\|_0^2&\le C (t_n\nu+\lambda^{-1})(\nu\|\nabla \tilde \be_h^0\|_0^2+ \delta\|\nabla r_h^0\|_0^2)
+C\nu\|\tilde\be_h^0\|_0^2+C t_n^2\nu C_3^n\delta\nonumber\\
&\quad+ C \Delta t\|q_h^0-q(0)\|_0^2\nonumber\\
&\quad +C t_n h^2\max_{t_1\le t\le t_n}(\nu\|\bv(t)\|_2^2+\|q(t)\|_1^2)+C t_n \nu\delta \max_{t_1\le t\le t_n}\|q(t)\|_1^2\nonumber\\
&\quad +Ct_{n+1}\Delta t^2\bigl(\nu t_n C_4^n+\nu C_1^{n+1}+C_6^{n+1}+\lambda^{-1}C_7^{n+1}
+\max_{t_0\le t\le t_n}\|q_t(t)\|_0^2\bigr),\nonumber\\
&\quad +C t_{n+1}h^4(\nu t_nC_5^n+ \nu C_2^{n+1}+\lambda^{-1}C_5^{n+1})+C t_{n+1}C_3^{n+1}\lambda^{-1}\delta^2.
\end{align}
 where $C_1^n$, $C_2^n$, $C_3^n$, $C_4^n$, $C_5^n$, $C_6^n$ and $C_7^n$ are the constants in (\ref{laC1}), (\ref{laC2}), (\ref{laC3}), (\ref{laC4}), (\ref{laC5}), (\ref{laC6}) and (\ref{laC7}) respectively.
\end{theorem}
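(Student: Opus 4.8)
The plan is to reduce Theorem~\ref{Th3} to Theorem~\ref{Th2} by means of the identity already noted above: if $\hat q_h^n=2q_h^n-q_h^{n-1}$ (with the convention $q_h^{-1}=q_h^0$, so that $\hat q_h^0=q_h^0$), then $(\tilde\bv_h^n,\hat q_h^n)$ is a solution of the modified Euler non-incremental scheme~(\ref{eq:eu_non_tilde})--(\ref{eq:eu_non_tilde2}) with the same $\delta$. Since the velocity iterates of the two schemes coincide, the stabilized Stokes approximations $({\mbf s}_h^n,z_h^n)$ and the quantity $\tilde\be_h^0=\tilde\bv_h^0-{\mbf s}_h^0$ are unchanged, and $\hat q_h^0-z_h^0=q_h^0-z_h^0=r_h^0$. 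Hence, under the hypotheses of Theorem~\ref{Th3} (which are exactly those of Theorem~\ref{Th1} for $(\tilde\bv_h^n,\hat q_h^n)$), Theorem~\ref{Th2} gives that $\sum_{j=1}^n\Delta t\,\|\hat q_h^j-q(t_j)\|_0^2$ is bounded by the right-hand side of~(\ref{eq:cotapresionl2final}).

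The only real work is to pass from this bound on $\hat q_h^n$ to the analogous bound on $q_h^n$. I would use that $q_h^n=\tfrac12(\hat q_h^n+q_h^{n-1})$, which for $e^n:=q_h^n-q(t_n)$ and $\hat e^n:=\hat q_h^n-q(t_n)$ gives the recursion
\begin{equation*}
e^n=\tfrac12\hat e^n+\tfrac12 e^{n-1}+\tfrac12\bigl(q(t_{n-1})-q(t_n)\bigr),\qquad n\ge1,\qquad e^0=q_h^0-q(0),
\end{equation*}
whose solution is $e^n=2^{-n}e^0+\sum_{k=1}^n 2^{-(n-k+1)}\bigl(\hat e^k+q(t_{k-1})-q(t_k)\bigr)$. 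The crucial feature is that the discrete kernel $2^{-(m+1)}$, $m\ge0$, has $\ell^1$-norm equal to $1$, so a discrete Young (convolution) inequality yields, with $C$ independent of $n$ and $\Delta t$,
\begin{equation*}
\Delta t\sum_{j=1}^n\|e^j\|_0^2\le C\,\Delta t\,\|e^0\|_0^2+C\,\Delta t\sum_{k=1}^n\|\hat e^k\|_0^2+C\,\Delta t\sum_{k=1}^n\|q(t_k)-q(t_{k-1})\|_0^2.
\end{equation*}

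To finish, I would estimate the three terms on the right. The middle one equals $\sum_k\Delta t\,\|\hat q_h^k-q(t_k)\|_0^2$ and is controlled by the right-hand side of~(\ref{eq:cotapresionl2final}) from the first step. For the last one, H\"older's inequality gives $\|q(t_k)-q(t_{k-1})\|_0^2\le\Delta t\int_{t_{k-1}}^{t_k}\|q_t\|_0^2\,dt$, hence $\Delta t\sum_k\|q(t_k)-q(t_{k-1})\|_0^2\le\Delta t^2\int_{t_0}^{t_n}\|q_t\|_0^2\,dt\le\Delta t^2 t_n\max_{t_0\le t\le t_n}\|q_t(t)\|_0^2$, which accounts for the additional $t_{n+1}\Delta t^2\max_{t_0\le t\le t_n}\|q_t(t)\|_0^2$ contribution in~(\ref{eq:cotapresionl2finalinc}). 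The first term is $\Delta t\,\|q_h^0-q(0)\|_0^2$, the new initial-pressure contribution. Collecting these three estimates, and bounding the constants exactly as in the proof of Theorem~\ref{Th2} via~(\ref{laC1})--(\ref{laC7}), produces~(\ref{eq:cotapresionl2finalinc}).

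I expect the recursion step to be the only non-routine part: the point is that the correspondence $\hat q_h^n=2q_h^n-q_h^{n-1}$ can be inverted without any loss in powers of $\Delta t$ or $t_n$ precisely because the induced recursion for $e^n$ has contraction factor $\tfrac12$, so that summability of the geometric kernel keeps the constant in the Young inequality bounded uniformly in $N$. The verification that $(\tilde\bv_h^n,\hat q_h^n)$ solves the non-incremental scheme, together with the two elementary estimates for the initial and time-increment terms, is straightforward once Theorem~\ref{Th2} is in hand.
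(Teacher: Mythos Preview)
Your proposal is correct and follows the same overall strategy as the paper: reduce to Theorem~\ref{Th2} via the substitution $\hat q_h^n=2q_h^n-q_h^{n-1}$, then pass from the bound on $\hat q_h^n-q(t_n)$ to one on $q_h^n-q(t_n)$ using the recursion $e^n=\tfrac12\hat e^n+\tfrac12 e^{n-1}+\tfrac12(q(t_{n-1})-q(t_n))$, and finally estimate the time-increment of~$q$ by H\"older.

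The one genuine difference is in how you exploit the recursion. You solve it explicitly and invoke a discrete Young inequality for the geometric kernel $2^{-(m+1)}$. The paper instead uses a one-line absorption: from the recursion and a weighted Cauchy--Schwarz inequality (so that the coefficient in front of $\|e^{j-1}\|_0^2$ is exactly~$\tfrac12$) one gets
\[
\sum_{j=1}^n\Delta t\,\|e^j\|_0^2\le \sum_{j=1}^n\Delta t\,\|\hat e^j\|_0^2+\tfrac12\sum_{j=1}^n\Delta t\,\|e^{j-1}\|_0^2+\sum_{j=1}^n\Delta t\,\|q(t_{j-1})-q(t_j)\|_0^2,
\]
and shifting the index in the middle sum and subtracting yields the same three terms you obtain, with the initial contribution $\tfrac12\Delta t\,\|e^0\|_0^2$. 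Both arguments hinge on the same fact (the contraction factor~$\tfrac12$), and both give constants independent of~$n$; the paper's route is just shorter and avoids the explicit convolution formula.
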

\begin{proof}
We first observe that
$$
q_h^n-q(t_n)=\frac{1}{2}(\hat q_h^n-q(t_n))+\frac{1}{2}(q_h^{n-1}-q(t_{n-1}))+\frac{1}{2}(q(t_{n-1})-q(t_n)).
$$
Taking into account that $(a+b+c)^2\le 4a^2+4b^2+2c^2$ for any $a,b,c\in {\mathbb R}$ we can write
\begin{eqnarray*}
\sum_{j=1}^n\Delta t \|q_h^j-q(t_j)\|_0^2&\le& \sum_{j=1}^n\Delta t \|\hat q_h^j-q(t_j)\|_0^2+\frac{1}{2}\sum_{j=1}^n\Delta t \|q_h^{j-1}-q(t_{j-1})\|_0^2\nonumber\\
&&\quad +\sum_{j=1}^n\Delta t \|q(t_{j-1})-q(t_j)\|_0^2
\end{eqnarray*}
and then
\begin{eqnarray*}
\frac{1}{2}\sum_{j=1}^n\Delta t \|q_h^j-q(t_j)\|_0^2&\le& \sum_{j=1}^n\Delta t \|\hat q_h^j-q(t_j)\|_0^2+\frac{1}{2}\Delta t \|q_h^{0}-q(0)\|_0^2\nonumber\\
&&\quad +\sum_{j=1}^n\Delta t \|q(t_{j-1})-q(t_j)\|_0^2.
\end{eqnarray*}
Applying (\ref{eq:cotapresionl2final}) and taking into account that
$$
\sum_{j=1}^n\Delta t \|q(t_{j-1})-q(t_j)\|_0^2\le \Delta t^2\int_{t_0}^{t_n}\|q_t\|_0^2~dt
$$
we reach (\ref{eq:cotapresionl2finalinc}).
\end{proof}
\begin{remark}
Choosing $\delta=\delta_2=\Delta t$ the error analysis of the modified Euler non-incremental scheme gives the analysis of the classical Euler non-incremental scheme with PSPG stabilization whenever condition (\ref{eq:cond_pidomask}) is assumed.
\end{remark}
\section{Numerical experiments}

In this section, we take~$\Omega=[0,1]\times[0,1]$ and all grids are regular $N\times N$ triangular grids with SWNE diagonals for different
values of~$N$

We first check that no better than second order convergence is achieved
in the velocity. For this purpose we consider the errors of the
steady state approximation~(\ref{eq:pro_stokes2}) to~(\ref{eq:stokes})
with $\nu=0.01$
where the forcing~term~$\hat\bg$ is such that the solution
is
\begin{align}
\label{eq:motiv_v}
{\mbf s}(x,y)&=\left[\begin{array}{cc} x^2(1-x)^2\sin(2\pi y)\\
-2x(1+3x+2x^2)\sin^2(\pi y)\end{array}\right],\\
z(x,y)&=\sin(x)\cos(y) +(\cos(1)-1)\sin(1).
\label{eq:motiv_p}
\end{align}
This solution is taken from~\cite{Berrone-Marro} and it
is used as a motivating example in~\cite{John_Novo_PSPG}.
We show the errors~${\mbf s}_h-I_h({\mbf s})$ and~$z_h-I_h(z)$, where
$I_h$ denotes the standard (Lagrange) interpolant on $N\times N$ grids, with $N$ ranging from~$20$ to~$320$ in the case of linear elements and
from~$10$ to~$160$ in the case of quadratic elements. Errors in~$L^2$ for the velocity for different values of~$\delta=h^2/(\nu\rho^2)$ are plotted as a function of
the mesh size~$h$ on the left of~Fig.~\ref{fig:steady_v}, where the results corresponding to a given value of~$\rho$ are joined by straight segments
of continuous and discontinuous line for linear and quadratic elements, respectively.
It can be observed that, for small values of $\rho$, linear and quadratic elements
produce the same errors. As $\rho$ increases the errors with
quadratic elements are smaller than those of linear elements but the convergence
rate is two for both methods. We can also observe that the optimal value of $\rho$ for the errors is around $\rho\approx100$ which gives
$\delta\approx 0.01 h^2$. This value is not far away from the value of $\delta=0.005 h^2$ suggested in \cite{John_Novo_PSPG} for the PSPG method.
\begin{figure}
\begin{center}
\includegraphics[height=5.3truecm]{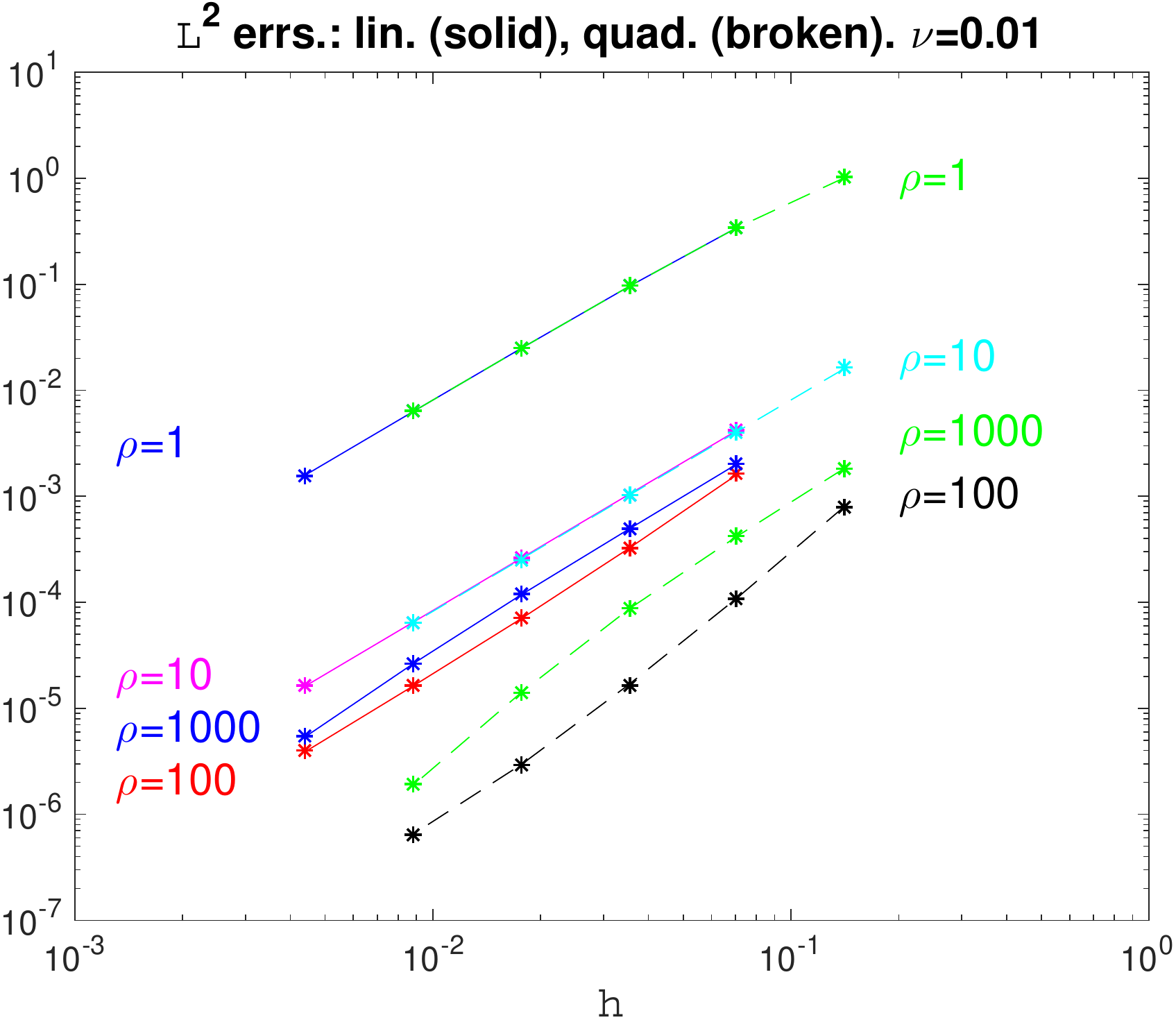}
\quad
\includegraphics[height=5.3truecm]{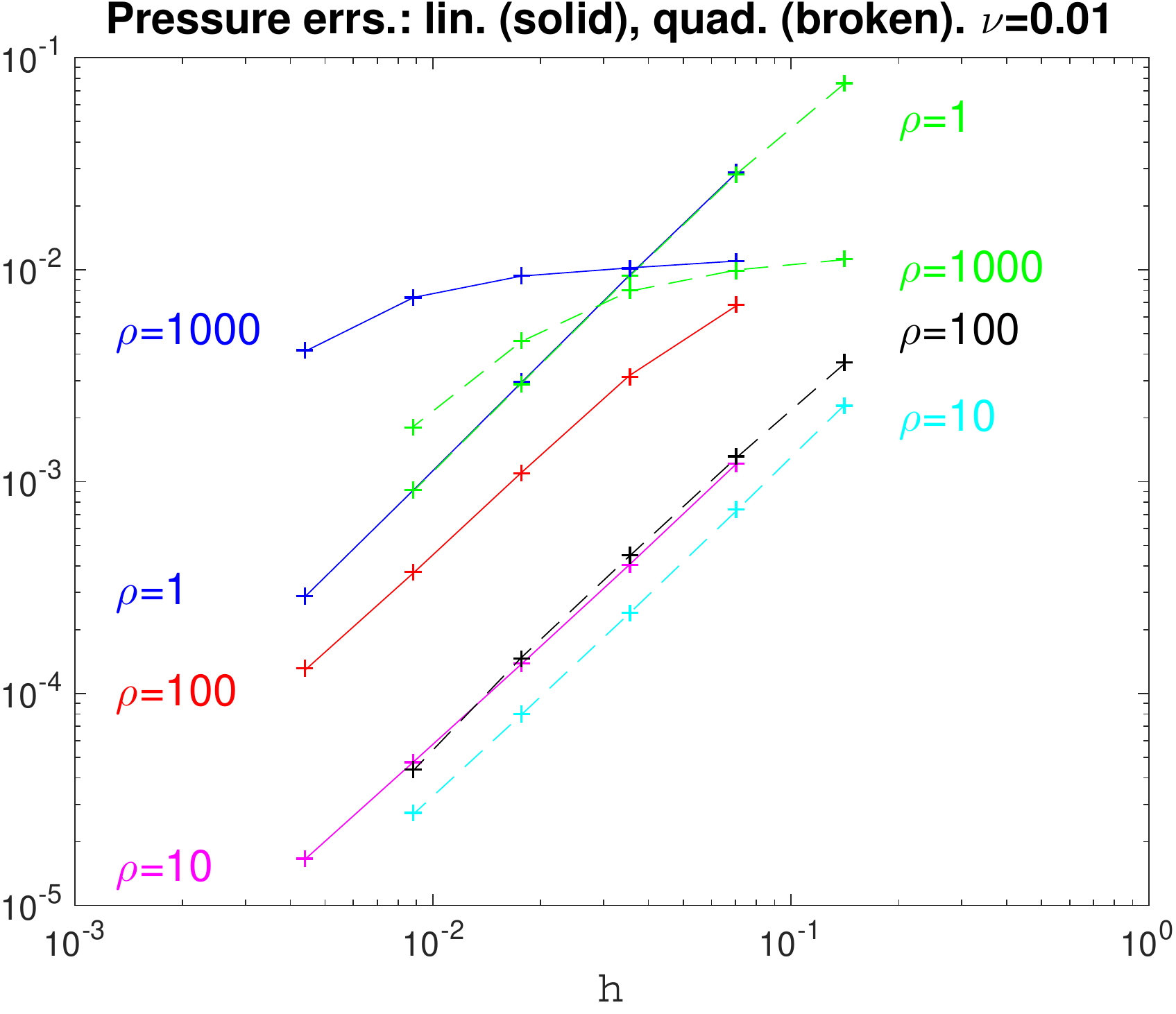}
\end{center}
\caption{On the left, errors ${\mbf s}_h -I_h({\mbf s})$ in $L^2$ for linear (solid line) and quadratic (broken line) elements for different values of~$\delta=h^2/(\nu\rho^2)$. On  the right, pressure errors $z_h-I_h(z)$.}\label{fig:steady_v}
\end{figure}

In the errors for the pressure, shown on the right of~Fig.~\ref{fig:steady_v} we can observe that for $\rho=1$ linear and quadratic elements produce the same errors but as $\rho$ increases the errors of quadratic elements are smaller although with the same convergence rate than linear elements. For the pressure the best value of $\rho$ is around $\rho\approx 10$ and as $\rho$ increases the errors in the pressure increase remarkably.  For $\rho=1000$ (i.e., $\delta=0.0001h^2$) we can observe that the errors of the pressure hardly decrease for most of the largest values~$h$. This result is in agreement with the fact that
$\delta$ must be strictly positive to stabilize
the pressure in~(\ref{eq:pro_stokes2}) if non inf-sup stable mixed finite elements are used.

\begin{figure}[h]
\begin{center}
   \includegraphics[height=5.3truecm]{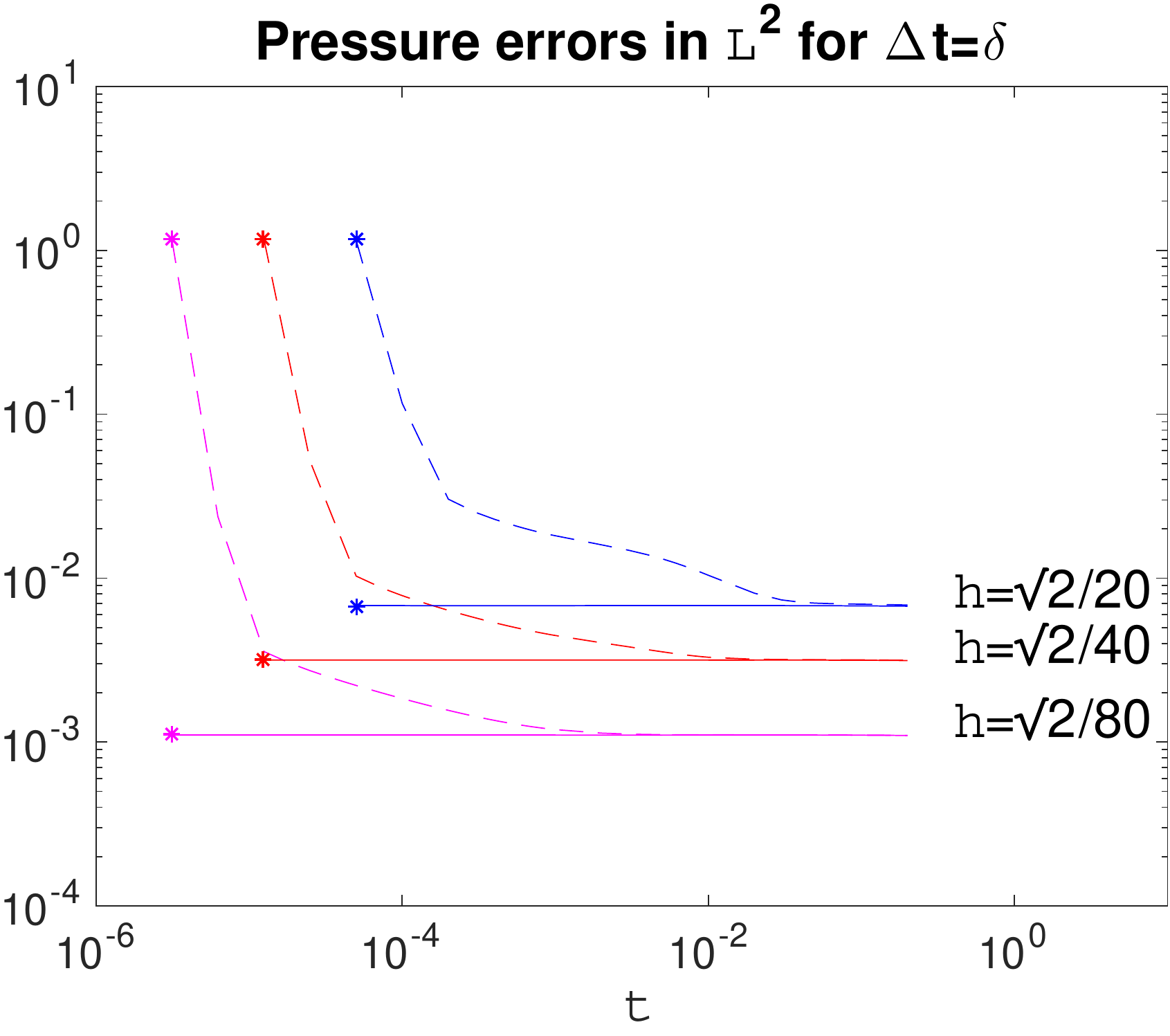}
  \end{center}
   \caption{Pressure errors $p_h^n-I_h(p(t_n))$ for $\Delta t=\delta$ y $\delta=h^2/(100\nu)$: Initial data~(\ref{eq:ini_s}) (solid line),
 and~(\ref{eq:ini_I}) (broken line).}
  \label{fig:evol1}
\end{figure}

For the evolution problem~(\ref{eq:evo_stokes}) we now study how
the choice of the initial condition affects
the errors in the method~(\ref{eq:eu_non_tilde})-(\ref{eq:eu_non_tilde2}). We
choose the forcing term~${\bg}$ so that the solution is
$$
{\bv}(x,y,t)={\mbf s}(x,y)\cos(t),\qquad q(x,y,t)=z(x,y)\cos(t),
$$
where~${\mbf s}$ and~$z$ are those in~(\ref{eq:motiv_v})-(\ref{eq:motiv_p}). We show the errors corresponding to two
different initial conditions, the first one being that given by the
linear interpolant of the true solution,
\begin{equation}
\label{eq:ini_I}
\tilde \bv_h^0=I_h(\bv(0)),\qquad q_h^0=I_h(q(0)),
\end{equation}
and the second one that given by the stabilized
 Stokes
approximation~(\ref{eq:pro_stokes})-(\ref{eq:pro_stokes2}) to~(\ref{eq:stokes})
\begin{equation}
\label{eq:ini_s}
\tilde \bv_h^0={\mbf s}_h(0)\qquad q_h^0=z_h(0)
\end{equation}
where $\hat\bg$ is chosen so that the solution is
$\bv(0)$ and~$q(0)$. According to Remark~\ref{re:remark6},
any initial data other than~(\ref{eq:ini_s}) should give an~$\mathcal{O}(1)$
error in the pressure in the first step. This can be seen
in~Fig.~\ref{fig:evol1}, where  we show
the time evolution of the errors $q_h^n - I_h(q(t_n))$, for $\delta=h^2/(100\nu)$
and decreasing values of~$h$. It can be observed that whereas for initial data given by~(\ref{eq:ini_s}) (joined by a solid line) the errors decrease with~$h$ already from the first step, they remain~$\mathcal{O}(1)$ in the first step for initial
data~(\ref{eq:ini_I}) (joined by a broken line). Nevertheless, these $\mathcal{O}(1)$ errors decay very fast with time and, for a fixed $t>0$ they decay with $h$ as well. Eventually, for
$t$ sufficiently large, they are indistinguishable from those corresponding
to initial data given by~(\ref{eq:ini_s}).

\end{document}